\font\smallit=cmti10
\font\smalltt=cmtt10
\renewcommand\section{\@startsection {section}{1}{\z@}
{-30pt \@plus -1ex \@minus -.2ex}
{2.3ex \@plus.2ex}
{\normalfont\normalsize\bfseries}}
\renewcommand\subsection{\@startsection{subsection}{2}{\z@}
{-3.25ex\@plus -1ex \@minus -.2ex}
{1.5ex \@plus .2ex}
{\normalfont\normalsize\bfseries}}
\renewcommand{\@seccntformat}[1]{\csname the#1\endcsname. }
\newtheorem{theorem}{Theorem}
\newtheorem{lemma}{Lemma}
\newtheorem{conjecture}{Conjecture}
\newtheorem{proposition}{Proposition}
\newtheorem{corollary}{Corollary}
\newtheorem{example}{Example}
\newtheorem{observation}{Observation}
\newtheorem{definition}{Definition}
\begin{document}

\begin{center}
\uppercase{\bf A new estimate on complexity of binary generalized pseudostandard words}
\vskip 20pt
{\bf Josef Florian}\\
{\smallit Department of Mathematics, Faculty of Nuclear Sciences and Physical Engineering, Czech Technical University in Prague, Czech Republic}\\
\vskip 10pt
{\bf L\!'ubom\'ira Dvo\v r\'akov\'a (born Balkov\'a)}\\
{\smallit Department of Mathematics, Faculty of Nuclear Sciences and Physical Engineering, Czech Technical University in Prague, Czech Republic}\\
{\tt lubomira.dvorakova@fjfi.cvut.cz}\\
\end{center}
\vskip 30pt

\centerline{\smallit Received: , Revised: , Accepted: , Published: } 
\vskip 30pt

\centerline{\bf Abstract}
\noindent Generalized pseudostandard words were introduced by de Luca and De Luca in~\cite{LuDeLu}. In comparison to the palindromic and pseudopalindromic closure, only little is known about the generalized pseudopalindromic closure and the associated generalized pseudostandard words. We present a~counterexample to Conjecture 43 from a~paper by Blondin Mass\'e et al.~\cite{MaPa} that estimated the complexity of binary generalized pseudostandard words as ${\mathcal C}(n) \leq 4n$ for all sufficiently large $n$. We conjecture that ${\mathcal C}(n)<6n$ for all $n \in \mathbb N$.

\pagestyle{myheadings}
\markright{\smalltt INTEGERS: 16 (2016)\hfill}
\thispagestyle{empty}
\baselineskip=12.875pt
\vskip 30pt

\section{Introduction}
This paper focuses on generalized pseudostandard words. Such words were defined by de Luca and De Luca in 2006~\cite{LuDeLu} who studied generalized standard episturmian words, called generalized pseudostandard words, by considering pseudopalindromic closure of an infinite sequence of involutory antimorphisms.
While standard episturmian and pseudostandard words have been studied intensively and a~lot of their properties are known (see for instance~\cite{BuLuZa,DrJuPi,Lu,LuDeLu}), only little has been shown so far about the generalized pseudopalindromic closure that gives rise to generalized pseudostandard words. In~\cite{LuDeLu} the authors have defined the generalized pseudostandard words and proved there that the famous Thue--Morse word is an example of such words. Jajcayov\'a et al.~\cite{JaPeSt} characterize generalized pseudostandard words in the class of generalized Thue--Morse words.
Jamet et al.~\cite{JaPaRiVu} deal with fixed points of the palindromic and pseudopalindromic closure and formulate an open problem concerning fixed points of the generalized pseudopalindromic closure. The authors of this paper provide a~necessary and sufficient condition on periodicity of binary and ternary generalized pseudostandard words in~\cite{BaFl}. The most detailed study of binary generalized pseudostandard words has been so far provided by Blondin Mass\'e et al.~\cite{MaPa}:
 \begin{itemize}
 \item A~so-called normalization is described that guarantees for generalized pseudostandard words that no pseudopalindromic prefix is missed during the construction.
 \item An effective algorithm -- the generalized Justin's formula -- for generation of generalized pseudostandard words is presented.
 \item The standard Rote words are proven to be generalized pseudostandard words and the infinite sequence of antimorphisms that generates such words is studied.
  \item A~conjecture is stated saying that the complexity of an infinite binary generalized pseudostandard word $\mathbf u$, i.e., the map ${\mathcal C}: \mathbb N \to \mathbb N$ defined by ${\mathcal C}(n)=$ the number of factors of length $n$ of the infinite word $\mathbf u$, satisfies:
$${\mathcal C}(n)\leq 4n \quad \text{for sufficiently large $n$.}$$
\end{itemize}

In this paper, we provide a~counterexample to the above conjecture by construction of a~generalized pseudostandard word satisfying ${\mathcal C}(n)>4n$ for all $n\geq 10$. We moreover show that ${\mathcal C}(n)>4.5 \ n $ for infinitely many $n \in \mathbb N$.

The work is organized as follows. In Section~\ref{sec:CoW} we introduce basics from combinatorics on words. Section~\ref{sec:palindrome} deals with the palindromic closure and summarizes known results. Similarly, Section~\ref{sec:pseudopalindrome} is devoted to the pseudopalindromic closure and its properties. In Section~\ref{sec:generalized_pseudopalindrome}, the generalized pseudopalindromic closure is defined and the normalization process is described. A~counterexample to Conjecture 43 from~\cite{MaPa} is constructed and its complexity is estimated in Section~\ref{sec:Conjecture4n}. In Section~\ref{sec:open_problems} we summarize known facts about the complexity of binary generalized pseudostandard words and state a~new conjecture: ${\mathcal C}(n) <6n$ for all $n \in \mathbb N$.

\section{Basics from combinatorics on words}\label{sec:CoW}
We restrict ourselves to the binary \emph{alphabet} $\{0,1\}$, we call $0$ and $1$ \emph{letters}. A~\emph{(finite) word} $w$ over $\{0,1\}$ is any finite binary sequence. Its length $|w|$ is the number of letters $w$ contains. The empty word -- the neutral element for concatenation of words -- is denoted by $\varepsilon$ and its length is set $|\varepsilon|=0$.
The set of all finite binary words is denoted by ${\{0,1\}}^*$.
An \emph{infinite word} $\mathbf u$ over $\{0,1\}$ is any binary infinite sequence.
The set of all infinite words is denoted $\{0,1\}^{\mathbb N}$.
A finite word $w$ is a~\emph{factor} of the infinite word $\mathbf u=u_0u_1u_2\ldots$ with $u_i \in \{0,1\}$ if there exists an index $i\geq 0$ such that $w=u_iu_{i+1}\ldots u_{i+|w|-1}$. Such an index is called an \emph{occurrence} of $w$ in $\mathbf u$. The symbol ${\mathcal L}(\mathbf u)$ is used for the set of factors of $\mathbf u$ and is called the \emph{language} of $\mathbf u$, similarly ${\mathcal L}_n(\mathbf u)$ stands for the set of factors of $\mathbf u$ of length $n$.
A~\emph{left special factor} of a~binary infinite word $\mathbf{u}$ is any factor $v$ such that both $0v$ and $1v$ are factors of $\mathbf{u}$. A~\emph{right special factor} is defined analogously. Finally, a~factor of $\mathbf{u}$ that is both right and left special is called a~\emph{bispecial}. We distinguish the following types of bispecials over $\{0,1\}$:

\begin{itemize}
\item A~\emph{weak bispecial} $w$ satisfies that only $0w1$ and $1w0$, or only $0w0$ and $1w1$ are factors of $\mathbf u$.
\item A~\emph{strong bispecial} $w$ satisfies that all $0w0$, $0w1$, $1w0$ and $1w1$ are factors of $\mathbf{u}$.
\item We do not use a~special name for bispecials that are neither weak nor strong.
\end{itemize}

Let $w \in \mathcal{L}({\mathbf u})$. A~\emph{left extension} of $w$ is any word $aw \in \mathcal{L}(\mathbf{u})$, where $a \in \{0,1\}$, and a~\emph{right extension} is defined analogously. A~\emph{bilateral extension} of $w$ is then $awb \in \mathcal{L}({\mathbf u})$, where $a,b \in \{0,1\}$. The set of left (resp. right extensions) of $w$ is denoted $\mathrm{Lext}(w)$ (resp. $\mathrm{Rext}(w)$). The \emph{(factor) complexity} of $\mathbf{u}$ is the map $\mathcal{C}_{{\mathbf u}}: {\mathbb N} \rightarrow {\mathbb N}$ defined as $$\mathcal{C}_{{\mathbf u}}(n) =\text{the number of factors of ${\mathbf u}$ of length $n$.}$$
In order to determine the complexity of an infinite word $\mathbf u$, the well-known formula for the \emph{second difference of complexity}~\cite{Cas} may be useful:
\begin{equation}\label{complexity2diff}
\Delta^2\mathcal{C}_{{\mathbf u}}(n) = \Delta\mathcal{C}_{{\mathbf u}}(n+1) - \Delta\mathcal{C}_{{\mathbf u}}(n) = \sum_{w\in \mathcal{L}_n({\mathbf u})}B(w),
\end{equation}
where $$B(w) = \#\{awb \mid a, b \in \{0,1\}, awb \in \mathcal{L}({\mathbf u})\} - \#\mathrm{Rext}(w) - \#\mathrm{Lext}(w) + 1$$
and the \emph{first difference of complexity} is defined as $\Delta\mathcal{C}_{{\mathbf u}}(n)=\mathcal{C}_{{\mathbf u}}(n+1)-\mathcal{C}_{{\mathbf u}}(n)$.

It is readily seen that for any factor of a~binary infinite word ${\mathbf u}$ the following holds:
\begin{itemize}
\item $B(w) = 1$ if and only if $w$ is a~strong bispecial.
\item $B(w) = -1$ if and only if $w$ is a~weak bispecial.
\item $B(w) = 0$ otherwise.
\end{itemize}

An infinite word $\mathbf u$ is called \emph{recurrent} if each of its factors occurs infinitely many times in $\mathbf u$. It is said to be \emph{uniformly recurrent} if for every $n \in \mathbb N$ there exists a~length $r(n)$ such that every factor of length $r(n)$ of $\mathbf u$ contains all factors of length $n$ of $\mathbf u$.
We say that an infinite word ${\mathbf u}$ is \emph{eventually periodic} if there exists $v, w \in \{0,1\}^{*}$ such that ${\mathbf u}=wv^{\omega}$, where $\omega$ denotes an infinite repetition. If $w=\varepsilon$, we call $\mathbf u$ \emph{(purely) periodic}. If $\mathbf u$ is not eventually periodic, $\mathbf u$ is said to be \emph{aperiodic}.
It is not difficult to see that if an infinite word is recurrent and eventually periodic, then it is necessarily purely periodic.
A~fundamental result of Morse and Hedlund \cite{MoHe1} states that a
word $\mathbf u$ is eventually periodic if and only if for some $n$ its
complexity is less than or equal to $n$. Infinite words of complexity $n+1$ for all $n$ are called \emph{Sturmian words}, and hence they are aperiodic words of the
smallest complexity.
Among Sturmian words we distinguish the class of \emph{standard (or characteristic) Sturmian words} satisfying that their left special factors are their prefixes at the same time. The Fibonacci word from Example~\ref{Fibonacci} is a~standard Sturmian word.
The first systematic study of Sturmian words was by Morse and Hedlund in~\cite{MorHed1940}.

A \emph{morphism} is a~map $\varphi: \{0,1\}^* \rightarrow \{0,1\}^*$ such that for every $v, w \in \{0,1\}^*$ we have $\varphi(vw) = \varphi(v) \varphi(w)$. It is clear that in order to define a~morphism, it suffices to provide letter images. A morphism is
\emph{prolongable} on $a \in \{0,1\}$ if $|\varphi(a)|\geq 2$ and
$a$ is a prefix of $\varphi(a)$. 
If $\varphi $ is prolongable on
$a$, then $\varphi^n(a)$ is a proper prefix of $\varphi^{n+1}(a)$
for all $n \in \mathbb{N}$. Therefore, the sequence
$(\varphi^n(a))_{n\geq 0}$ of words defines an infinite word $\mathbf u$
that is a~fixed point of $\varphi$. Such a word $\mathbf u$ is a~\emph{(pure) morphic word}.

\begin{example}\label{Fibonacci}
The most studied Sturmian word is the
so-called Fibonacci word
\[{\mathbf u}_F=01001010010010100101001001010010\ldots\]
fixed by the morphism $\varphi_F(0)=01$ and $\varphi_F(1)=0$.
\end{example}

\begin{example}\label{ThueMorse}
Another well-known morphic word that however does not belong to Sturmian words is the
Thue-Morse word
\[{\mathbf u}_{TM}=01101001100101101001011001101001\ldots\]
fixed by the morphism $\varphi_{TM}(0)=01$ and $\varphi_{TM}(1)=10$ (we start with the letter $0$ when generating ${\mathbf u}_{TM}$).
\end{example}

An \emph{involutory antimorphism} is a~map $\vartheta: \{0,1\}^* \rightarrow \{0,1\}^*$ such that for every $v, w \in \{0,1\}^*$ we have $\vartheta(vw) = \vartheta(w) \vartheta(v)$ and moreover $\vartheta^2$ equals identity. There are only two involutory antimorphisms over the alphabet $\{0,1\}$: the \emph{reversal (mirror) map} $R$ satisfying $R(0)=0, R(1)=1$, and the \emph{exchange antimorphism} $E$ given by $E(0)=1, E(1)=0$. We use the notation $\overline{0} = 1$ and $\overline{1} = 0$, $\overline{E} = R$ and $\overline{R} = E$. A finite word $w$ is a~\emph{palindrome} if $w = R(w)$, and $w$ is an $E$-\emph{palindrome} ({\em pseudopalindrome}) if $w = E(w)$.

\section{Palindromic closure}\label{sec:palindrome}
In this section we describe the construction of binary infinite words generated by the palindromic closure. Further on, we recall some properties of such infinite words. We use the papers~\cite{DrJuPi,Lu} as our source.
\begin{definition}
Let $w \in \{0,1\}^*$. The \emph{palindromic closure} $w^R$ of a~word $w$ is the shortest palindrome having $w$ as prefix.
\end{definition}

Consider for instance the word $w=0100$. Its palindromic closure $w^R$ equals $010010$. It is readily seen that $|w| \leq |w^R| \leq 2|w|-1$. For $w = 010$ we have $w^R = 010$ and for $w = 0001$ we obtain $w^R = 0001000$. It is worth noticing that the palindromic closure can be constructed in the following way: Find the longest palindromic suffix $s$ of $w$. Denote $w = ps$. Then $w^R = psR(p)$. For instance, for $w = 0100$ we have $s = 00$ and $p = 01$. Thus $w^R = 01\underline{00}10$.

\begin{definition}
Let $\Delta = \delta_1 \delta_2 \ldots$, where $\delta_i \in \{0,1\}$ for all $i \in \mathbb{N}$. The infinite word $\mathbf{u}(\Delta)$ \emph{generated by the palindromic closure (or $R$-standard word)} is the word whose prefixes $w_n$ are obtained from the recurrence relation
$$w_{n+1} = (w_n \delta_{n+1})^{R},$$ $$w_0 = \varepsilon.$$ The sequence $\Delta$ is called the \emph{directive sequence} of the word $\mathbf{u}(\Delta)$.
\end{definition}

\noindent {\bf Properties of the $R$-standard word} $\mathbf{u}=\mathbf{u}(\Delta)\in \{0,1\}^{\mathbb N}$:
\begin{enumerate}
\item The sequence of prefixes $(w_k)_{k\geq 0}$ of $\mathbf{u}$ contains every palindromic prefix of~$\mathbf{u}$.
\item The language of $\mathbf u$ is closed under reversal, i.e., $w$ is a~factor of $\mathbf u$ $\Leftrightarrow$ $R(w)$ is a~factor of $\mathbf u$.
\item The word $\mathbf u$ is uniformly recurrent.
\item Every left special factor of $\mathbf u$ is a~prefix of $\mathbf u$.
\item If $w$ is a bispecial factor of $\mathbf u$, then $w=w_k$ for some $k$.
\item Since $\mathbf u$ is (uniformly) recurrent, it is either aperiodic or purely periodic.
\item The word $\mathbf u$ is standard Sturmian if and only if both $0$ and $1$ occur in the directive sequence $\Delta$ infinitely many times.
\item The word $\mathbf u$ is periodic if and only if $\Delta$ is of the form $v0^{\omega}$ or $v1^{\omega}$ for some $v \in \{0,1\}^*$.
\end{enumerate}

\begin{example}
The Fibonacci word $\mathbf{u_F}$ defined in Example~\ref{Fibonacci} is the most famous example of an infinite word generated by the palindromic closure. It is left as an exercise for the reader to show that $\mathbf{u_F} = \mathbf{u}((01)^{\omega})$. Let us form the first few prefixes $w_k$:

\begin{align}
w_1 =& \; 0 \nonumber \\
w_2 =& \; 010 \nonumber \\
w_3 =& \; 010010 \nonumber \\
w_4 =& \; 01001010010. \nonumber
\end{align}

\end{example}
\section{Pseudopalindromic closure}\label{sec:pseudopalindrome}
Let us recall here the definition of the pseudopalindromic closure and the construction of binary infinite words generated by the pseudopalindromic closure. Some of their properties are similar as for the palindromic closure, but in particular their complexity is already slightly more complicated. Pseudopalindromes and the pseudopalindromic closure have been studied for instance in~\cite{BuLuZa, LuDeLu}.

\begin{definition}
Let $w \in \{0,1\}^*$. The \emph{pseudopalindromic closure} $w^E$ of a~word $w$ is the shortest $E$-palindrome having $w$ as prefix.
\end{definition}

Consider $w=0010$ which has pseudopalindromic closure $w^E=001011$. The following inequalities hold: $|w| \leq |w^E| \leq 2|w|$. For instance for $w = 0101$ we have $w^E = 0101$, while for $w = 000$ we get $w^E = 000111$. Let us point out that the pseudopalindromic closure may be constructed in the following way: Find the longest pseudopalindromic suffix of~$w$. Denote it by $s$ and denote the remaining prefix by $p$, i.e., $w = ps$. Then $w^E = psE(p)$. For $w=0010$, we obtain $p = 00$ and $s = 10$, therefore $w^E = 00\underline{10}11$.

\begin{definition}
Let $\Delta = \delta_1 \delta_2 \ldots$, where $\delta_i \in \{0,1\}$ for all $i \in \mathbb{N}$. The infinite word $\mathbf{u}_E(\Delta)$ \emph{generated by the pseudopalindromic closure (or $E$-standard or pseudostandard word)} is the word whose prefixes $w_n$ are obtained from the recurrence relation
$$w_{n+1} = (w_n \delta_{n+1})^{E},$$
$$w_0 = \varepsilon.$$
The sequence $\Delta$ is called the \emph{directive sequence} of the word $\mathbf{u}_E(\Delta)$.
\end{definition}

\noindent {\bf Properties of the $E$-standard word} $\mathbf{u}=\mathbf{u}_E(\Delta) \in \{0,1\}^{\mathbb N}$:
\begin{enumerate}
\item The sequence of prefixes $(w_k)_{k\geq 0}$ of $\mathbf{u}$ contains every pseudopalindromic prefix of $\mathbf{u}$.
\item The language of $\mathbf u$ is closed under the exchange antimorphism, i.e., $w$ is a~factor of $\mathbf u$ $\Leftrightarrow$ $E(w)$ is a~factor of $\mathbf u$.
\item The word $\mathbf u$ is uniformly recurrent.
\item A close relation between $R$-standard and $E$-standard words has been revealed in Theorem 7.1 in~\cite{LuDeLu}:
Let $\Delta = \delta_1 \delta_2 \ldots$, where $\delta_i \in \{0,1\}$ for all $i \in \mathbb{N}$. Then $$\mathbf{u}_E(\Delta)=\varphi_{TM}({\mathbf u}(\Delta)).$$
In words, any $E$-standard word is the image by the Thue-Morse morphism $\varphi_{TM}$ of the $R$-standard word with the same directive sequence $\Delta$. Moreover, the set of pseudopalindromic prefixes of $\mathbf{u}_E(\Delta)$ equals the image by $\varphi_{TM}$ of the set of palindromic prefixes of $\mathbf{u}(\Delta)$.
\item If $\Delta$ contains both $0$ and $1$ infinitely many times, then every prefix of $\mathbf u$ is left special.
\item In contrast to infinite words generated by the palindromic closure, $\mathbf u$ can contain left special factors that are not prefixes. Nevertheless, such left special factors can be of length at most $2$.
\item If $w$ is a bispecial factor of $\mathbf u$ of length at least $3$, then $w=w_k$ for some $k$.
\item Since $\mathbf u$ is (uniformly) recurrent, it is either aperiodic or purely periodic.
\item The complexity of $\mathbf u$ satisfies ${\mathcal C}_{\mathbf u}(n+1)-{\mathcal C}_{\mathbf u}(n)=1$ for all $n \geq 3$ if and only if both $0$ and $1$ occur in the directive sequence $\Delta$ infinitely many times.
\item The word $\mathbf u$ is periodic if and only if $\Delta$ is of the form $v0^{\omega}$ or $v1^{\omega}$ for some $v \in \{0,1\}^*$.
\end{enumerate}

\begin{example}
Let us illustrate the construction of an infinite word generated by the pseudopalindromic closure for $\mathbf{u} = \mathbf{u}_E((01)^{\omega})$. Here are the first prefixes $w_k$:
\begin{align}
w_1 =& \; 01 \nonumber \\
w_2 =& \; 011001 \nonumber \\
w_3 =& \; 011001011001 \nonumber \\
w_4 =& \; 0110010110011001011001. \nonumber
\end{align}
Notice that $1$ and $10$ are left special factors that are not prefixes.
The reader can also check that $\mathbf u$ is the image by $\varphi_{TM}$ of the Fibonacci word, i.e., $\mathbf u=\varphi_{TM}({\mathbf u}_F)$.
\end{example}
\section{Generalized pseudopalindromic closure}\label{sec:generalized_pseudopalindrome}
Generalized pseudostandard words form a~generalization of infinite words generated by the palindromic  (resp. pseudopalindromic) closure; such a~construction was first described and studied in~\cite{LuDeLu}.
Let us start with their definition and known properties; we use the papers~\cite{JaPaRiVu,LuDeLu,MaPa}.

\subsection{Definition of generalized pseudostandard words}
Let us underline that we again restrict ourselves only to the binary alphabet $\{0,1\}$.
%
%
\begin{definition}
Let $\Delta = \delta_1 \delta_2 \ldots$ and $\Theta = \vartheta_1 \vartheta_2 \ldots$, where $\delta_i \in \{0,1\}$ and $\vartheta_i \in \{E, R\}$ for all $i \in \mathbb{N}$. The infinite word $\mathbf{u}(\Delta, \Theta)$ \emph{generated by the generalized pseudopalindromic closure (or generalized pseudostandard word)} is the word whose prefixes $w_n$ are obtained from the recurrence relation
$$w_{n+1} = (w_n \delta_{n+1})^{\vartheta_{n+1}},$$ $$w_0 = \varepsilon.$$ The sequence $\Lambda = (\Delta, \Theta)$ is called the \emph{directive bi-sequence} of the word $\mathbf{u}(\Delta, \Theta)$.
\end{definition}

\noindent {\bf Properties of the generalized pseudostandard word} $\mathbf u=\mathbf{u}(\Delta, \Theta) \in \{0,1\}^{\mathbb N}$:
\begin{enumerate}
\item If $R$ (resp. $E$) is contained in $\Theta$ infinitely many times, then the language of $\mathbf u$ is closed under reversal (resp. under the exchange antimorphism).
\item The word $\mathbf u$ is uniformly recurrent.
\end{enumerate}

\subsection{Normalization}
In contrast to $E$- and $R$-standard words, the sequence $(w_k)_{k\geq 0}$ of prefixes of a~generalized pseudostandard word ${\mathbf u}(\Delta, \Theta)$ does not have to contain all $E$-palindromic and palindromic prefixes of ${\mathbf u}(\Delta, \Theta)$. Blondin Mass\'e et al.~\cite{MaPa} introduced the notion of normalization of the directive bi-sequence.

\begin{definition}
A~directive bi-sequence $\Lambda=(\Delta, \Theta)$ of a~generalized pseudostandard word $\mathbf{u}(\Delta, \Theta)$ is called \emph{normalized} if the sequence of prefixes $(w_k)_{k\geq 0}$ of $\mathbf{u}(\Delta, \Theta)$ contains all $E$-palindromic and palindromic prefixes of ${\mathbf u}(\Delta, \Theta)$.
\end{definition}

\begin{example} \label{ex:norm}
Let $\Lambda=(\Delta, \Theta) = ((011)^{\omega}, (EER)^{\omega})$. Let us write down the first prefixes of $\mathbf{u}(\Delta, \Theta)$:
\begin{align}
w_1 =& \;01 \nonumber \\
w_2 =& \;011001 \nonumber \\
w_3 =& \;01100110 \nonumber \\
w_4 =& \;0110011001. \nonumber
\end{align}
The sequence $w_k$ does not contain for instance the palindromic prefixes $0$ and $0110$ of $\mathbf{u}(\Delta, \Theta)$.
\end{example}
The authors of~\cite{MaPa} proved that every directive bi-sequence $\Lambda$ can be normalized, i.e., transformed to such a~form $\widetilde \Lambda$ that the new sequence $(\widetilde{w_k})_{k\geq 0}$ contains already every $E$-palindromic and palindromic prefix and $\widetilde \Lambda$ generates the same generalized pseudostandard word as~$\Lambda$.

\begin{theorem}[\cite{MaPa}]\label{thm:norm}
Let $\Lambda = (\Delta, \Theta)$ be a~directive bi-sequence. Then there exists a~normalized directive bi-sequence $\widetilde{\Lambda} = (\widetilde{\Delta}, \widetilde{\Theta})$ such that ${\mathbf u}(\Delta, \Theta) = {\mathbf u}(\widetilde{\Delta}, \widetilde{\Theta})$.

Moreover, in order to normalize the sequence $\Lambda$, it suffices firstly to execute the following changes of its prefix (if it is of the corresponding form):
\begin{itemize}
\item $(a\bar{a}, RR) \rightarrow (a\bar{a}a, RER)$,
\item $(a^i, R^{i-1}E) \rightarrow (a^i\bar{a}, R^iE)$ for $i \geq 1$,
\item $(a^i\bar{a}\bar{a}, R^iEE) \rightarrow (a^i\bar{a}\bar{a}a, R^iERE)$ for $i \geq 1$,
\end{itemize}
and secondly to replace step by step from left to right every factor of the form:
\begin{itemize}
\item $(ab\bar{b}, \vartheta\overline{\vartheta}\overline{\vartheta}) \rightarrow (ab\bar{b}b, \vartheta\overline{\vartheta}\vartheta\overline{\vartheta})$,
\end{itemize}
where $a, b \in \{0,1\}$ and $\vartheta \in \{E,R\}$.
\end{theorem}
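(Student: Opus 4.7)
The plan is to approach the theorem in two stages: first to characterize exactly when a step of the construction skips a pseudopalindromic prefix, and then to verify that the listed rewrites repair every such skip without altering the generated word. The fundamental tool is the closure formula $w^\vartheta = p\, s\, \vartheta(p)$, where $s$ is the longest $\vartheta$-palindromic suffix of $w$ and $w = ps$, which was already reviewed for $R$ and $E$ in Sections~\ref{sec:palindrome} and \ref{sec:pseudopalindrome}. Using this, I would first show that a $\overline{\vartheta_{n+1}}$-palindromic prefix of $w_{n+1}$ is missed, i.e., strictly between $|w_n|$ and $|w_{n+1}|$, if and only if $w_n \delta_{n+1}$ admits an $\overline{\vartheta_{n+1}}$-palindromic suffix strictly longer than every $\overline{\vartheta_{n+1}}$-palindromic prefix produced at any earlier step. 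Normalization is then equivalent to saying that no such long opposite-type suffix ever appears.

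For the main rewrite $(ab\bar{b},\vartheta\overline{\vartheta}\overline{\vartheta}) \to (ab\bar{b}b,\vartheta\overline{\vartheta}\vartheta\overline{\vartheta})$, soundness follows from a direct calculation with the closure formula: inserting the extra operation $(b,\vartheta)$ between the two $\overline{\vartheta}$-steps produces an intermediate prefix $w'$, and one checks that $w'$ is a proper prefix of the original $w_{n+3}$ and that $(w' \bar{b})^{\overline{\vartheta}}$ equals $w_{n+3}$, so the generated infinite word is unchanged. For completeness, I would case-analyze which patterns of three consecutive operations can produce a skipped pseudopalindrome: expanding $w_n\delta_{n+1}\delta_{n+2}\delta_{n+3}$ through successive closures and using the characterization above, the only internal pattern where an opposite-type palindromic suffix can overshoot all previous ones is exactly $(ab\bar{b},\vartheta\overline{\vartheta}\overline{\vartheta})$. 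The three prefix rewrites then handle the initial degenerate configurations in which there is not yet enough context for the length-three pattern to apply; each is verified by a short direct computation on the first few $w_k$.

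Termination of the iterated left-to-right procedure is immediate: once position $n$ has been processed, all subsequent rewrites occur strictly to its right, so the processed prefix of $\widetilde{\Lambda}$ is monotone non-decreasing and the limit bi-sequence $\widetilde{\Lambda}$ is well-defined. Combined with the skip characterization, this yields that $\widetilde{\Lambda}$ is normalized and $\mathbf{u}(\widetilde{\Delta},\widetilde{\Theta}) = \mathbf{u}(\Delta,\Theta)$. The main obstacle I expect is the completeness case analysis: one must prove that no skip is ever caused by a genuinely longer sliding window of four or more consecutive operations, so that a single length-three local rule (after the bounded initial correction) really suffices. I expect this to reduce to a careful but finite enumeration exploiting the fact that at each closure step the new pseudopalindrome can contain at most one additional palindromic prefix of the opposite type, which prevents the skipping phenomenon from stacking across non-overlapping windows.
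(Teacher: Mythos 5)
This theorem is not proved in the paper at all: it is imported verbatim from Blondin Mass\'e et al.~\cite{MaPa} (their normalization algorithm), and the paper only uses it as a black box (e.g.\ in Observation~\ref{obs:normtvar} and Example~\ref{ex:norm2}). So there is no in-paper argument to compare yours against; the relevant comparison is with the proof in~\cite{MaPa}, whose overall strategy (characterize when a closure step skips a $\overline{\vartheta}$-palindromic prefix, show the local rewrites repair every skip without changing the word, handle the initial segment separately) your outline does mirror.

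Judged on its own, however, your proposal has a genuine gap: everything that is hard is deferred. The two load-bearing claims --- (i) that a prefix is skipped at step $n+1$ exactly when $w_n\delta_{n+1}$ has an opposite-type palindromic suffix longer than all previously produced opposite-type prefixes, and (ii) that each closure step can introduce at most one new pseudopalindromic prefix of the opposite type, so that the only pattern ever responsible for a skip is the length-three window $(ab\bar{b},\vartheta\overline{\vartheta}\overline{\vartheta})$ --- are stated as things you ``would show'' or ``expect,'' and (ii) is precisely the content of the theorem. Without it you cannot exclude skips caused by longer windows, which you yourself identify as the main obstacle. Two further points need attention: the rewrite does not merely \emph{insert} a step $(b,\vartheta)$ --- it replaces the third pair $(\bar{b},\overline{\vartheta})$ by the two pairs $(\bar{b},\vartheta),(b,\overline{\vartheta})$, and the soundness computation must show both that the new intermediate prefix is the skipped pseudopalindrome and that the final closure reproduces the original $w_{n+3}$; and your termination argument tacitly assumes that applying a rewrite never creates a new forbidden pattern overlapping its left end, which must be checked since each rewrite alters the letters and antimorphisms at and after the rewritten position. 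As it stands the proposal is a plausible roadmap for reconstructing the proof of~\cite{MaPa}, not a proof.
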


\begin{example} \label{ex:norm2}
Let us normalize the directive bi-sequence $\Lambda = ((011)^{\omega}, (EER)^{\omega})$ from Example~\ref{ex:norm}.
According to the procedure from Theorem~\ref{thm:norm}, we transform first the prefix of $\Lambda$. We replace $(0,E)$ with $(01,RE)$ and get $\Lambda_1 = (01(110)^{\omega}, RE(ERE)^{\omega})$. The prefix of $\Lambda_1$ is still of a~forbidden form, we replace thus the prefix $(011,REE)$ with $(0110, RERE)$ and get $\Lambda_2 = (0110(101)^{\omega}, RERE(REE)^{\omega})$. The prefix of $\Lambda_2$ is now correct. It remains to replace from left to right the factors $(101, REE)$ with $(1010, RERE)$. Finally, we obtain $\widetilde{\Lambda} = (0110(1010)^{\omega}, RERE(RERE)^{\omega})=(01(10)^{\omega}, (RE)^{\omega})$, which is already normalized. Let us write down the first prefixes $(\widetilde{w_k})_{k\geq 0}$ of ${\mathbf u}(\widetilde{\Lambda})$:
\begin{align}
\widetilde{w_1} =& \;0 \nonumber \\
\widetilde{w_2} =& \;01 \nonumber \\
\widetilde{w_3} =& \;0110 \nonumber \\
\widetilde{w_4} =& \;011001. \nonumber
\end{align}
We can notice that the new sequence $(\widetilde{w_k})_{k\geq 0}$ now contains the palindromes $0$ and $0110$ that were skipped in Example~\ref{ex:norm}.
\end{example}

\section{Conjecture 4n}\label{sec:Conjecture4n}
As a~new result, we will construct a~counterexample to Conjecture $4n$ (stated as Conjecture $43$ in~\cite{MaPa}):
\begin{conjecture}[Conjecture 4n]
For every binary generalized pseudostandard word ${\mathbf u}$ there exists $n_0 \in {\mathbb N}$ such that  $\mathcal{C}_{{\mathbf u}}(n) \leq 4n$ for all $n>n_0$.
\end{conjecture}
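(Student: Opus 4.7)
The plan is to reduce Conjecture $4n$ to a bound on the first difference of complexity. By Theorem~\ref{thm:norm} I may assume the directive bi-sequence $\Lambda = (\Delta, \Theta)$ is normalized, so that the sequence of prefixes $(w_k)_{k \ge 0}$ of $\mathbf{u} = \mathbf{u}(\Delta, \Theta)$ enumerates every palindromic and pseudopalindromic prefix. The goal is to prove that for sufficiently large $n$ there are at most four right special factors of length $n$; since on a binary alphabet $\Delta \mathcal{C}_{\mathbf{u}}(n)$ equals the number of right special factors of length $n$, telescoping then gives $\mathcal{C}_{\mathbf{u}}(n) \le 4n + c$, implying the conjecture.

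The first step is a structural claim extending items $5$ (for $R$-standard words) and $7$ (for $E$-standard words) to the generalized setting: every bispecial factor of $\mathbf{u}$ whose length exceeds some threshold coincides with one of the prefixes $w_k$. Granting this, every left special factor of length $n$ is a prefix of $\mathbf{u}$, so there is at most one; by the closure of the language under the antimorphisms that appear infinitely often in $\Theta$ (property 1), a right special factor is the image of a left special factor under $R$ and/or $E$, and this correspondence multiplies the count by at most four. Hence $\Delta \mathcal{C}_{\mathbf{u}}(n) \le 4$ for $n$ large.

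To make the structural claim precise I would use formula~(\ref{complexity2diff}) and track the contribution $B(w_k) \in \{-1, 0, 1\}$ of each prefix bispecial. In the purely palindromic regime every prefix bispecial is strong ($B = 1$); in the pseudopalindromic regime the same holds for $|w_k| \ge 3$. In the generalized regime, the alternation of $R$ and $E$ within $\Theta$ can turn some prefix bispecials into weak bispecials, and the crucial input is a case analysis, indexed by the local pattern $\vartheta_k \vartheta_{k+1} \vartheta_{k+2}$ in $\Theta$ together with the pair $\delta_k \delta_{k+1}$ in $\Delta$, showing that each such configuration produces a bounded number of new bispecial factors, all of which are either prefixes $w_k$ or short enough to be absorbed into the additive constant $c$.

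The main obstacle is the interplay between the two antimorphisms. When $\Theta$ mixes $R$ and $E$, each normalized prefix $w_k$ can carry \emph{simultaneously} a palindromic and a pseudopalindromic cousin in the language, effectively doubling the stock of special factors near certain lengths; moreover, bilateral extensions $awb$ around prefix bispecials can be enriched by both closures at once, creating strong bispecials that do not appear in the prefix sequence. Showing that the number of such extraneous strong bispecials is outweighed by weak ones (so that $\sum_w B(w) \le 0$ infinitely often and the first difference stays bounded by $4$) is the crux. My suspicion is that for carefully chosen $\Lambda$ with long alternating patterns in $\Theta$ this balance fails and the proposed strategy breaks down, so the main difficulty is not merely technical but may signal that the threshold $4n$ is too tight.
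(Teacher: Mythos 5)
Your proposal sets out to \emph{prove} the conjecture, but the statement is false, and this paper's main result is precisely its refutation: the word ${\mathbf u_p} = {\mathbf u}(1^{\omega}, (EERR)^{\omega})$ satisfies $\mathcal{C}_{{\mathbf u_p}}(n) > 4n$ for all $n \geq 10$ (Theorem~\ref{thm:counterexample}), and even $\limsup \mathcal{C}_{{\mathbf u_p}}(n)/n \geq 4.57735$ (Theorem~\ref{limsup}), so no threshold $n_0$ as in the conjecture can exist. To your credit, your closing paragraph correctly senses the danger: your suspicion that for suitably mixed $\Theta$ the balance of strong and weak bispecials fails and that $4n$ is ``too tight'' is exactly what the paper establishes.

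The concrete step that breaks is your structural claim that every sufficiently long bispecial factor coincides with a prefix $w_k$, extending item~5 (for $R$-standard words) and item~7 (for $E$-standard words) to the generalized setting. In ${\mathbf u_p}$ there are arbitrarily long weak bispecials $s_{4k+1}$ and $s_{4k+3}$, together with their $E$- and $R$-images, which are neither prefixes nor images of prefixes (Lemma~\ref{lemma:BSs_k} and Proposition~\ref{proposition:weakBS}). Hence the count ``at most one left special factor per length, multiplied by at most four via the antimorphisms'' collapses: besides the two infinite left special branches (${\mathbf u_p}$ and its $ER$-image), the non-prefix weak bispecials create detached finite left special branches, and---decisively---these detached branches overlap in length, since $|s_{2k+1}| < |w_{2k+1}| < |s_{2k+3}|$ (Observation~\ref{obs:nerovn}). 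This forces $\Delta\mathcal{C}_{{\mathbf u_p}}(n) \geq 6$ on the entire intervals $|w_{2k+1}| < n \leq |s_{2k+3}|$ (Corollary~\ref{coro:comp1diff}), and these intervals have length proportional to $|w_{2k-2}|$ (Observation~\ref{obs:length_weakBS}), so the excess over $4n$ is not an additive constant that your telescoping could absorb but accumulates to a genuinely linear term. In other words, the crux you flagged---showing $\sum_w B(w) \leq 0$ often enough to keep $\Delta\mathcal{C}(n) \leq 4$---fails persistently for $\Lambda = (1^{\omega},(EERR)^{\omega})$, and no repair of the strategy can succeed, because the statement itself is false; the paper instead proposes the replacement bound $\mathcal{C}(n) < 6n$ for all $n$.
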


We have found a~counterexample ${\mathbf u_p} = {\mathbf u}(1^{\omega}, (EERR)^{\omega})$ that satisfies $\mathcal{C}_{{\mathbf u_p}}(n) > 4n$ for all $n \geq 10$. Moreover, we will show in the end of this section that $\mathbf u_p$ even satisfies ${\mathcal C}(n)\geq 4.577 \ n$ for infinitely many $n \in \mathbb N$. Let us write down the first prefixes $w_n$ of ${\mathbf u}_p$:
\begin{align}
    w_1 =& \; 10 \nonumber \\
    w_2 =& \; 1010 \nonumber \\
    w_3 =& \; 10101 \nonumber \\
    w_4 =& \; 1010110101 \nonumber \\
    w_5 =& \; 1010110101100101001010 \nonumber \\
    w_6 =& \; 1010110101100101001010110101100101001010 \nonumber
\end{align}
It is readily seen that $w_{4k+1}$ and $w_{4k+2}$ are $E$-palindromes, while $w_{4k+3}$ and $w_{4k+4}$ are palindromes for all $k \in \mathbb N$.

The aim of this section is to prove the following theorem.
\begin{theorem}\label{thm:counterexample}
The infinite word ${\mathbf u_p} = {\mathbf u}(1^{\omega}, (EERR)^{\omega})$ satisfies
$$\mathcal{C}_{{\mathbf u_p}}(n) > 4n \quad \text{for all $n \geq 10$.}$$
\end{theorem}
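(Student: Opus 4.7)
The plan is to bound $\mathcal{C}_{\mathbf{u_p}}(n)$ from below via Cassaigne's formula~\eqref{complexity2diff}, which reduces the question to counting strong and weak bispecial factors at each length. First I would fix the structure of the prefix sequence: since $\delta_i = 1$ for every $i$ and $\vartheta_i$ cycles through $EERR$, one checks by induction on $k$ that $w_{4k+1}$ and $w_{4k+2}$ are $E$-palindromes while $w_{4k+3}$ and $w_{4k+4}$ are palindromes, as already visible in the listed prefixes. I would then verify that the bi-sequence $(1^{\omega},(EERR)^{\omega})$ is already normalized (none of the forbidden patterns in Theorem~\ref{thm:norm} appears in its periodic tail), so that by definition the prefixes $(w_k)_{k\geq 0}$ exhaust all palindromic and $E$-palindromic prefixes of $\mathbf{u_p}$.

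The heart of the argument is a characterization lemma: \emph{every bispecial factor of $\mathbf{u_p}$ of length above a small threshold is one of the prefixes $w_k$}. The strategy I would follow mirrors the proofs recalled for $R$- and $E$-standard words. Because $\Theta$ contains both $R$ and $E$ infinitely often, the language of $\mathbf{u_p}$ is closed under both antimorphisms (Property~1 of generalized pseudostandard words); consequently any left special factor $v$ gives rise to a corresponding right special factor via $R(v)$ or $E(v)$, and a bispecial of sufficient length propagates into a (pseudo)palindromic prefix. Combined with uniform recurrence (Property~2), this should force the bispecial to coincide with the unique such prefix of its length, i.e.\ with some $w_k$.

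Next I would classify each $w_k$ as strong, weak, or neither by exhibiting its bilateral extensions inside $\mathbf{u_p}$. Because $\delta_{k+1} = 1$ always and because the antimorphism $\vartheta_{k+1}$ cycles through a period-four pattern, the type of $w_k$ depends only on $k \bmod 4$; this reduces the classification to four cases, each of which I would settle by reading off the extensions from the explicit shape of $w_{k+1}$ and $w_{k+2}$. I expect (in line with the claimed $4.577\,n$ lower bound announced at the end of the section) that roughly half of the $w_k$ turn out to be strong bispecials and that weak bispecials are rare, which is exactly what is needed to beat the $4n$ bound.

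With the list of bispecials and their types in hand, \eqref{complexity2diff} yields an explicit step-function expression for $\Delta^2\mathcal{C}_{\mathbf{u_p}}(n)$, with jumps at the lengths $|w_k|$; summing twice gives $\mathcal{C}_{\mathbf{u_p}}(n)$ in terms of these lengths, which grow geometrically because of the period-four structure of $\Lambda$. Finally I would verify the inequality $\mathcal{C}_{\mathbf{u_p}}(n) > 4n$ for the small values $10 \leq n \leq |w_6|$ by direct enumeration of factors from the prefix $w_6$ printed above, and use the closed-form lower bound for $n > |w_6|$. The main obstacle is the bispecial characterization together with the case-by-case classification of the $w_k$: once those are settled, the complexity estimate becomes essentially bookkeeping, but establishing that no bispecial is missed and no $w_k$ is misclassified is where the real work lies.
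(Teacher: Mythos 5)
There is a genuine gap, and it sits exactly at what you call the heart of the argument. Your proposed characterization lemma --- \emph{every bispecial factor of $\mathbf{u_p}$ above a small threshold is one of the prefixes $w_k$} --- is false for this word, and its failure is precisely the mechanism that makes $\mathbf{u_p}$ a counterexample to the $4n$ conjecture. The analogous statement does hold for $R$-standard words and (for length $\geq 3$) for $E$-standard words, but it does not carry over to generalized pseudostandard words: the paper exhibits an infinite family of weak bispecials $s_{4k+1}$ and $s_{4k+3}$ (e.g.\ $s_5 = 011010110$, $s_7 = 010101101011001010010101$) of unbounded length that are \emph{not} prefixes of $\mathbf{u_p}$ (which begins with $10101\ldots$), together with their $E$- and $R$-images; even among the strong bispecials, $R(w_{4k+1})$ and $E(w_{4k+3})$ are non-prefix bispecials that must be counted. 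Your related expectation that ``weak bispecials are rare'' is also backwards: each length $|w_{2k+1}|$ contributing $\Delta^2\mathcal{C} = +2$ is matched by a length $|s_{2k+3}|$ contributing $-2$, and it is only because the cancellation is \emph{delayed} (the interlacing $|s_{2k+1}| < |w_{2k+1}| < |s_{2k+3}|$) that $\Delta\mathcal{C}$ oscillates between $4$ and $6$ rather than collapsing to $4$; if weak bispecials were genuinely rare, $\Delta\mathcal{C}$ would be unbounded, contradicting the $\limsup \mathcal{C}(n)/n \approx 4.58$ announced in the same section. Locating, proving bispeciality of, and --- hardest of all --- proving the \emph{completeness} of this list of non-prefix bispecials is the long technical core of the paper (the proof of Proposition~\ref{proposition:weakBS}, via a classification of ``regular'' and ``irregular'' occurrences of the $w_k$ and their images), and nothing in your outline substitutes for it.

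A secondary but real error: the bi-sequence $(1^{\omega},(EERR)^{\omega})$ is \emph{not} already normalized. Its prefix $(1,E)$ matches the forbidden pattern $(a^i, R^{i-1}E)$ with $i=1$ from Theorem~\ref{thm:norm}, and Observation~\ref{obs:normtvar} gives the normalized form $(1010\,(1)^{\omega}, RERE(RREE)^{\omega})$, whose prefix sequence differs from $(w_k)$ by an index shift and two extra short prefixes. Consequently your assertion that the $(w_k)$ exhaust all palindromic and $E$-palindromic prefixes is false as stated (they miss $1$ and $101$), and the correct normalized form is actually needed in the paper's proofs (e.g.\ in Lemma~\ref{lemma:consecutive_members}) to control which pseudopalindromic suffixes can occur. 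The remaining ingredients of your plan --- Cassaigne's formula, the period-four case analysis of the prefixes, geometric growth of $|w_k|$, and a finite check for small $n$ --- all match the paper, but without a correct account of the non-prefix bispecials the bookkeeping cannot be carried out.
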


In order to prove Theorem~\ref{thm:counterexample}, we have to describe all weak bispecial factors and find enough strong bispecial factors so that it provides us with a~lower bound on the second difference of complexity (see Equation~\eqref{complexity2diff}) that leads to the strict lower bound equal to $4n$ on the complexity of $\mathbf u_p$. The partial steps will be formulated in several lemmas and observations.

Let us start with a~description of the relation between the consecutive prefixes $w_k$ and $w_{k+1}$ that will turn out to be useful in many proofs. The knowledge of the normalized form of the directive bi-sequence is needed.

\begin{observation}\label{obs:normtvar}
The directive bi-sequence $\Lambda = (1^{\omega}, (EERR)^{\omega})$ has the normalized form $\widetilde{\Lambda} = (1010(1)^{\omega}, RERE(RREE)^{\omega})$.
\end{observation}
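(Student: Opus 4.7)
The plan is to apply Theorem~\ref{thm:norm} mechanically to $\Lambda = (1^\omega, (EERR)^\omega)$ and track the single prefix substitution and single factor substitution that the procedure demands. First, since $(\delta_1, \vartheta_1) = (1, E)$ matches the pattern $(a^i, R^{i-1}E)$ with $i=1$ and $a=1$, the prefix rule $(1,E) \to (10, RE)$ fires. Using that the tail of $(EERR)^\omega$ after dropping its first letter is $ERR \cdot (EERR)^\omega$, this produces
\[
\Lambda_1 = (10 \cdot 1^\omega,\; REE \cdot (RREE)^\omega).
\]
I would then verify that no further prefix rule applies to $\Lambda_1$: the $\Theta$-prefix begins $RE$, ruling out $(a\bar a, RR)$; the $\Delta$-prefix $10$ forces $i=1$ in $(a^i, R^{i-1}E)$, which would then require $\vartheta_1 = E$, contradicting $\vartheta_1 = R$; and $(a^i\bar a\bar a, R^iEE)$ would demand a $\Delta$-prefix $100$ or $011$ instead of the actual $101$.

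Next I would run the left-to-right factor scan on $\Lambda_1$. At position $1$ the triple $(\delta_1\delta_2\delta_3, \vartheta_1\vartheta_2\vartheta_3) = (101, REE)$ fits $(ab\bar b, \vartheta\bar\vartheta\bar\vartheta)$ with $(a,b,\vartheta) = (1,0,R)$ and is rewritten as $(1010, RERE)$, yielding
\[
\Lambda_2 = (1010 \cdot 1^\omega,\; RERE \cdot (RREE)^\omega) = \widetilde\Lambda.
\]
To finish I would check that no subsequent triple matches. Only positions $2, 3, 4$ straddle the boundary between the modified prefix and the tail $1^\omega$; their triples read $(010, ERE)$, $(101, RER)$, $(011, ERR)$, and each is eliminated by inspection ($ERE \neq ERR$, $RER \neq REE$, and $011$ is not of the form $ab\bar b$). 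From position $5$ onward $\Delta$ is constantly $1$, so every $\delta$-triple equals $111$, which is never of the form $ab\bar b$ since that would force the last two letters to differ. Hence the scan terminates.

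I do not foresee any genuine obstacle: once Theorem~\ref{thm:norm} is invoked, the argument reduces to finite pattern-matching against the three prefix forms and the one factor form. The only subtlety worth flagging is that the tail $1^\omega$ in $\Delta$ blocks every would-be factor substitution past the boundary, so normalization terminates after exactly one prefix step and one factor step.
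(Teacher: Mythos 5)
Your proposal is correct and follows exactly the route the paper takes: the paper's proof consists of the single sentence that the normalized form is obtained by the algorithm of Theorem~\ref{thm:norm}, and you simply carry that algorithm out in full (one application of the prefix rule $(a^i,R^{i-1}E)\rightarrow(a^i\bar a,R^iE)$ with $i=1$, one application of the factor rule at position $1$, and the verification that the tail $1^\omega$ blocks all further substitutions). All the pattern-matching steps check out, so your write-up is just a more detailed version of the paper's one-line proof.
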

\begin{proof}
The normalized form is obtained using the algorithm from Theorem~\ref{thm:norm}.
\end{proof}
\begin{example}\label{ex:normtvar}
The prefixes $\widetilde{w_n}$ of ${\mathbf u}(\widetilde{\Lambda})$ satisfy:
\begin{align}
    \widetilde{w_1} =& \; 1 \nonumber \\
    \widetilde{w_2} =& \; 10 \nonumber \\
   \widetilde{w_3} =& \; 101 \nonumber \\
   \widetilde{w_n} =& \; w_{n-2} \quad \text{for all $n\geq 4$.} \nonumber
\end{align}
\end{example}

\begin{lemma} \label{lemma:consecutive_members}
For the infinite word ${\mathbf u_p} = {\mathbf u}(1^{\omega}, (EERR)^{\omega})$ and $k \in \mathbb{N}$, the following relations hold. For $z \leq 0$ we set $w_z = \varepsilon$.
\begin{align}
    w_{4k+1} = & \;w_{4k}10E(w_{4k})  \nonumber \\
    w_{4k+2} = & \;w_{4k+1}w_{4k-2}^{-1}w_{4k+1}  \nonumber \\
    w_{4k+3} = & \;w_{4k+2}(010)^{-1}R(w_{4k+2}) \nonumber \\
    w_{4k+4} = & \;w_{4k+3}w_{4k}^{-1}w_{4k+3}. \nonumber
\end{align}
\end{lemma}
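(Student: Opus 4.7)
The plan is to prove all four identities simultaneously by induction on $k\in\mathbb{N}$, starting from the base case $k=0$ verified by direct inspection. Adopting the convention $w_z=\varepsilon$ for $z\le 0$, the identities at $k=0$ read $w_1=10$, $w_2=w_1 w_1=1010$, $w_3=w_2(010)^{-1}R(w_2)=1\cdot 0101=10101$, and $w_4=w_3 w_3=1010110101$, all matching the listed values.

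The inductive step rests on the standard closure formula: if $v=ps$ and $s$ is the longest $\vartheta$-palindromic suffix of $v$, then $v^\vartheta=p\, s\, \vartheta(p)$. Setting $v=w_n\cdot 1$ in the four cases $n\equiv 0,1,2,3\pmod{4}$ and reading $\vartheta$ from $(EERR)^\omega$, the task for each identity reduces to correctly identifying the longest $\vartheta$-palindromic suffix of $w_n\cdot 1$. Two auxiliary invariants, carried through the joint induction, make this tractable: \textbf{(F1)} $w_{4k+1}$ and $w_{4k+2}$ are $E$-palindromes while $w_{4k+3}$ and $w_{4k+4}$ are palindromes, which is visible on the right-hand sides of the four identities once established for the given $k$; and \textbf{(F2)} $w_{4k-2}$ is a suffix of $w_{4k+1}$ and $w_{4k}$ is a suffix of $w_{4k+3}$, which follows from (F1) since a $\vartheta$-palindromic prefix $u$ of a $\vartheta$-palindromic word $v$ satisfies $u=\vartheta(u)$ and hence is a suffix of $\vartheta(v)=v$. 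Given (F1)--(F2), I would name the candidate $s$ in each case---namely $s=\varepsilon$ for the first identity, $s=0\cdot w_{4k-2}\cdot 1$ for the second (an $E$-palindrome by (F1) and a suffix of $w_{4k+1}\cdot 1$ by (F2)), and analogous single-letter or $\delta\cdot w_{4k}\cdot\delta$-shaped choices for the fourth and third identities---and verify by direct calculation that $p\, s\, \vartheta(p)$ simplifies to the prescribed right-hand side, repeatedly using that the relevant $w_j$ is $\vartheta$-fixed by (F1).

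The main obstacle is the \emph{maximality} of each proposed $s$: ruling out longer $\vartheta$-palindromic suffixes of $w_n\cdot 1$. For this I would invoke Observation~\ref{obs:normtvar} together with Example~\ref{ex:normtvar}: the normalized bi-sequence $\widetilde\Lambda$ enumerates all $E$-palindromic and palindromic prefixes of $\mathbf{u}_p$ in the list $(\widetilde{w_k})_{k\ge 0}$, which coincides with $(w_k)_{k\ge 0}$ up to a shift of index by $2$. A strictly longer $\vartheta$-palindromic suffix of $w_n\cdot 1$ would, via the closure formula, produce a $\vartheta$-palindromic prefix of $\mathbf{u}_p$ strictly between $w_n$ and the candidate right-hand side in length, contradicting this enumeration. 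Once maximality is secured in each of the four cases, the identities fall out directly from the closure formula, completing the induction.
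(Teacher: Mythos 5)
Your overall skeleton---induction on $k$, the closure formula $v^\vartheta = p\,s\,\vartheta(p)$, naming a candidate longest $\vartheta$-palindromic suffix in each of the four residue classes, and appealing to the normalized bi-sequence of Observation~\ref{obs:normtvar}---matches the paper's proof, and your base case, the invariants (F1)--(F2), and the check that $p\,s\,\vartheta(p)$ simplifies to the stated right-hand sides are all sound. The gap is in the maximality step, which is the crux of the lemma. You argue that a strictly longer $\vartheta$-palindromic suffix of $w_n1$ would, via the closure, produce a $\vartheta$-palindromic prefix of $\mathbf{u_p}$ of length strictly between $|w_n|$ and that of the candidate, ``contradicting the enumeration'' $(\widetilde{w_k})$. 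But there is no contradiction: that hypothetical prefix would simply be $w_{n+1}=(w_n1)^\vartheta=\widetilde{w_{n+3}}$ itself, which of course belongs to the enumeration. The list $(\widetilde{w_k})$ tells you that every pseudopalindromic prefix is one of the $w_j$ (up to the index shift by $2$), but it does not pin down the lengths $|w_{n+1}|,|w_{n+2}|,\dots$ independently of the very recurrences you are proving; asserting that no member of the enumeration has length strictly between $|w_n|$ and the candidate length is precisely the claim $|w_{n+1}|\geq|\text{candidate}|$, i.e., what is to be shown. As written, the step is circular.

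The repair is to use the normalized form the way the paper does: classify \emph{all} $\vartheta$-palindromic suffixes of $w_n1$ directly, rather than reasoning about what a longer one would produce. For instance, a nonempty $E$-palindromic suffix of $w_{4k}1$ must have the form $0t1$ with $t$ an $E$-palindromic suffix of $w_{4k}$ preceded by $0$; since $w_{4k}$ is a palindrome, $t=R(\widetilde{w_\ell})$ for some $E$-palindromic prefix $\widetilde{w_\ell}$, and $0R(\widetilde{w_\ell})$ is a suffix of $w_{4k}$ if and only if $\widetilde{w_\ell}0$ is a prefix---which never happens, because $\widetilde{\Delta}=1010\,1^{\omega}$ shows that no $E$-palindromic $\widetilde{w_\ell}$ is followed by $0$. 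This exhausts every candidate suffix and is non-circular; the other three cases are analogous (e.g., for $w_{4k+2}$ the admissible suffixes are $0\widetilde{w_\ell}1$ with $\widetilde{w_\ell}$ an $E$-palindromic prefix followed by $1$, the longest proper such prefix being $w_{4k-2}$). With the maximality step replaced in this way, the rest of your plan goes through.
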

\begin{proof}
The statement follows from Theorem~29 in~\cite{MaPa}. We prefer however to prove it here also for integrity of our paper.
One can easily check that the statement holds for $w_1$, $w_2$, $w_3$ and $w_4$. Let $k\geq 1$.
\begin{itemize}
\item
In order to get the $E$-palindrome $w_{4k+1}$, it is necessary to find the longest $E$-palindromic suffix of $w_{4k}1$. In other words, it is necessary to find the longest $E$-palindromic suffix preceded by $0$ of the palindrome $w_{4k}$. Taking into account the normalized form of the directive bi-sequence $\widetilde{\Lambda}$ from Observation~\ref{obs:normtvar}, for every $E$-palindromic (resp. palindromic) prefix $p$ of $\mathbf{u_p}$ there exists $\ell \in \mathbb N$ such that $p = \widetilde{w_\ell}$. Therefore all $E$-palindromic suffixes of $w_{4k}$ are of the form $R(\widetilde{w_\ell})$, where $\widetilde{w_\ell} = E(\widetilde{w_\ell})$. However, we search only for the longest $E$-palindromic suffix of $w_{4k}$ preceded by $0$. If $0R(\widetilde{w_\ell})$ is a~suffix of $w_{4k}$, then $\widetilde{w_\ell}0$ has to be the prefix of $w_{4k}$. Using the normalized form $\widetilde{\Lambda}$ we nevertheless notice that no $\widetilde{w_\ell} = E(\widetilde{w_\ell})$ is followed by $0$. Consequently, $w_{4k+1} = w_{4k}10E(w_{4k})$.

\item
To obtain the $E$-palindrome $w_{4k+2}$, we look for the longest $E$-palindromic suffix of $w_{4k+1}1$. We proceed analogously as in the previous case, thus we search for the longest $E$-palindromic prefix $\widetilde{w_\ell}$ of $w_{4k+1}$ followed by $1$. Then $E(\widetilde{w_\ell}1) = 0\widetilde{w_\ell}$ is the longest $E$-palindromic suffix of $w_{4k+1}$ preceded by $0$. It follows from the form of $\widetilde{\Lambda}$ that every $E$-palindromic prefix $\widetilde{w_\ell}$ of $w_{4k+1}$ is followed by $1$.
Moreover, according to Example~\ref{ex:normtvar}, $E$-palindromes in the sequence $(w_k)_{k\geq 0}$ coincide with $E$-palindromes in the sequence $(\widetilde{w_k})_{k\geq 0}$, therefore the longest $E$-palindromic prefix $\widetilde{w_\ell}$ of $w_{4k + 1}$ followed by $1$ is $w_{4k-2}$. Consequently, $w_{4k+2} = w_{4k+1}w_{4k-2}^{-1}w_{4k+1}$.
\item The remaining two cases are similar. They are left as an exercise for the reader.
\end{itemize}
\end{proof}

It is not difficult to find strong bispecials among members of the sequence $(w_k)_{k\geq 0}$.
\begin{lemma}\label{lemma:strongBS}
Consider ${\mathbf u_p} = {\mathbf u}(1^{\omega}, (EERR)^{\omega})$ and let $k \in \mathbb N$. Then $w_{4k+1}$ and $w_{4k+3}$ are strong bispecials of ${\mathbf u_p}$. Moreover, $1w_{4k+1}0$ is a~central factor of $w_{4(k+1)+1}$ and $0w_{4k+3}0$ is a~central factor of $w_{4(k+1)+3}$.
\end{lemma}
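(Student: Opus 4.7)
The plan is first to establish the two central-factor claims, as these supply the non-obvious bilateral extensions; the strong-bispecial property of $w_{4k+1}$ and $w_{4k+3}$ then follows by combining those centers with factors read off directly from Lemma~\ref{lemma:consecutive_members} and from closure of the language of $\mathbf{u_p}$ under the two antimorphisms $E$ and $R$ (both appear infinitely often in $\Theta$). The structural facts I will use throughout are the decomposition $w_{4k+1}=w_{4k}\cdot 10\cdot E(w_{4k})$ with $w_{4k}$ a palindrome (trivial for $k=0$), and the identity $E(w)=\overline{w}$ whenever $w$ is a palindrome.

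For the first central-factor claim I apply $w_{4k+5}=w_{4k+4}\cdot 10\cdot E(w_{4k+4})$: the central factor of length $|w_{4k+1}|+2$ equals the suffix of $w_{4k+4}$ of length $|v|$, then ``$10$'', then the prefix of $E(w_{4k+4})$ of length $|v|$, where $v=w_{4k}\cdot 1$ is the first half of the $E$-palindrome $w_{4k+1}$. Using palindromicity of $w_{4k+4}$ (and of $w_{4k}$), the last $|v|$ chars of $w_{4k+4}$ are $R(v)=1\cdot w_{4k}$; and since $E(w_{4k+4})=\overline{w_{4k+4}}$, its first $|v|$ chars are $\overline{v}=\overline{w_{4k}}\cdot 0$. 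Concatenation gives $1\cdot w_{4k}\cdot 10\cdot\overline{w_{4k}}\cdot 0=1\cdot w_{4k+1}\cdot 0$, as needed. The second claim is similar but more intricate: writing $w_{4k+7}=q'\cdot 010\cdot R(q')$ with $q'=w_{4k+6}$ minus its last three chars, and $w_{4k+3}=q\cdot 010\cdot R(q)$ with $q=w_{4k+2}$ minus its last three, palindrome symmetry reduces the claim to showing that the last $|q|+1$ chars of $q'$ equal $0\cdot q$. These chars are extracted by the suffix chain $w_{4k+6}$ ends in $w_{4k+5}$ ends in $E(w_{4k+4})=\overline{w_{4k+4}}$ ends in $\overline{w_{4k+3}}$, and bit-complementing turns the equality into the relation $q[j]=\overline{q[|q|+2-j]}$ for $j=3,\ldots,|q|-1$ (together with known values of $q[0],q[1],q[2],q[3]$ and $q[|q|-1]$); this is exactly the $E$-palindromicity of $w_{4k+2}$. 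The suffix bookkeeping here is the main technical obstacle.

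With both central factors in hand, the bilateral extensions of $w_{4k+1}$ are: $w_{4k+1}\cdot 1$, immediate from $w_{4k+2}=(w_{4k+1}\cdot 1)^{E}$; $0\cdot w_{4k+1}\cdot 1$, since the $E$-palindrome $w_{4k+2}$ also ends in $0\cdot w_{4k+1}$ and $w_{4k+3}$ starts with $w_{4k+2}\cdot 1$; $1\cdot w_{4k+1}\cdot 0$, the central factor just proved; $1\cdot w_{4k+1}\cdot 1$, obtained by placing $w_{4k+1}$ at the start of the second copy of $w_{4k+3}$ inside $w_{4k+4}=w_{4k+3}\cdot w_{4k}^{-1}\cdot w_{4k+3}$, so that the palindromicity of $w_{4k+3}$ forces the preceding character $w_{4k+3}[|w_{4k}|]=1$ and the following character $w_{4k+3}[|w_{4k+1}|]=1$; and $0\cdot w_{4k+1}\cdot 0$ as the $E$-image of $1\cdot w_{4k+1}\cdot 1$. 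For $w_{4k+3}$ the argument is parallel: $1w_{4k+3}1$ comes from $w_{4k+3}$ being a palindromic suffix of $w_{4k+4}$ preceded by $1$ (same palindrome argument) and followed in $w_{4k+5}$ by the first $1$ of its central ``$10$''; $0w_{4k+3}0$ is the second central-factor claim; $0w_{4k+3}1$ arises in $w_{4k+6}=w_{4k+5}\cdot w_{4k+2}^{-1}\cdot w_{4k+5}$, where the second copy of $w_{4k+5}$ begins with $w_{4k+3}$, whose preceding character (computed via $E(w_{4k+4})=\overline{w_{4k+4}}$ and $w_{4k+4}[|w_{4k+2}|]=1$) is $0$ and whose following character is $1$; and $1w_{4k+3}0$ is the $R$-image of $0w_{4k+3}1$, using that $w_{4k+3}$ is a palindrome.
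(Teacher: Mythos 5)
Your proposal is correct and follows essentially the same route as the paper: both arguments use the recurrences of Lemma~\ref{lemma:consecutive_members} together with the (pseudo)palindromic symmetry of the prefixes $w_n$, the fact that every $w_n$ is followed by $1$, and closure of the language under $E$ and $R$ to exhibit all four bilateral extensions, with the extension $1w_{4k+1}0$ (resp.\ $0w_{4k+3}0$) located as the central factor of $w_{4(k+1)+1}$ (resp.\ $w_{4(k+1)+3}$) exactly as in the paper. The only cosmetic difference is that you pin $1w_{4k+1}1$ and $0w_{4k+3}1$ to explicit overlap positions inside $w_{4k+4}$ and $w_{4k+6}$, whereas the paper reads $1R(w_{4k+1})1$ off as a suffix of an arbitrary later palindromic prefix and then applies both antimorphisms.
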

\begin{proof}
Let us show the statement for the $E$-palindrome $w_{4k+1}$.
The proof for the palindrome $w_{4k+3}$ is similar.

Since $\Delta = 1^{\omega}$, the prefix $w_{4k+1}$ is followed by $1$. Consider now any $E$-palindrome $w_j$ such that $j>4k+1$. Since $w_j = E(w_j)$ and $w_{4k+1}1$ is a~prefix of $w_j$, the factor $0E(w_{4k+1})=0w_{4k+1}$ is a~suffix of $w_j$. The prefix $w_j$ is again followed by $1$, therefore $0w_{4k+1}1 \in \mathcal{L}(\mathbf{u_p})$. Consider further on any palindrome $w_\ell$ such that $\ell > 4k+1$. Since $w_{4k+1}1$ is again a~prefix of $w_\ell = R(w_\ell)$, the factor $1R(w_{4k+1})$ is a~suffix of $w_\ell$. The prefix $w_\ell$ is followed by $1$, thus $1R(w_{4k+1})1 \in \mathcal{L}(\mathbf{u_p})$. Since the language is closed under $R$ and $E$, we deduce that $1w_{4k+1}1, 0w_{4k+1}0 \in \mathcal{L}(\mathbf{u_p})$.

Let us find the missing bilateral extension $1w_{4k+1}0$ of the $E$-palindrome $w_{4k+1}$. We will show that $1w_{4k+1}0$ is a~central factor of $w_{4(k+1)+1}$. By Lemma~\ref{lemma:consecutive_members} we have $$w_{4(k+1)+1} = w_{4(k+1)}10E(w_{4(k+1)}).$$ The factor $w_{4k}1$ is a~prefix of the palindrome $w_{4(k+1)}$, therefore $1R(w_{4k}) = 1w_{4k}$ is a~suffix of $w_{4(k+1)}$. It implies moreover that $E(1w_{4k})=E(w_{4k})0$ is a~prefix of $E(w_{4(k+1)})$. Altogether we see that $$1w_{4k+1}0 = 1w_{4k}10E(w_{4k})0$$ is a~central factor of $w_{4(k+1)+1}$.
\end{proof}

Let us indicate how we managed to find weak bispecials. The factor $w_k$ has $w_{k-1}1$ as prefix. When constructing $w_k=\vartheta(w_k)$, one looks for the longest $\vartheta$-palindromic suffix of $w_{k-1}1$. In order to get a~weak bispecial, we look instead for the longest $\overline{\vartheta}$-palindromic suffix of $w_{k-1}1$. If this suffix is longer than the longest $\vartheta$-palindromic suffix, we check whether its bilateral extension is also a~$\overline{\vartheta}$-palindrome. If yes, we extend it and continue in the same way. When we arrive at the moment where it is not possible to extend it any more, we have a~bispecial factor: We get either a~factor of the form $ap\overline{a}$, where $p=R(p)$, and since the language is closed under reversal, $\overline{a}pa$ is a~factor of ${\mathbf u_p}$ too. Or we get a~factor of the form $apa$, where $p=E(p)$, and since the language is closed under the exchange antimorphism, $\overline{a}p\overline{a}$ is a~factor of ${\mathbf u_p}$ too.

\begin{example}\label{ex:shortest_weakBS}
Let us show how to obtain the shortest weak bispeacials using the above described way. The factor $w_5$ is an $E$-palindrome. The longest $E$-palindromic suffix of $w_4 1$ is $\varepsilon$, while the longest palindromic suffix is $1101011$. This palindrome may be moreover extended by $0$ to $011010110$. We have thus obtained the shortest weak bispecial. The second weak bispecial of the same length is $E(011010110)=100101001$.
\end{example}

Let us now provide a~formal description of weak bispecials.

\begin{lemma}\label{lemma:BSs_k}
Consider ${\mathbf u_p} = {\mathbf u}(1^{\omega}, (EERR)^{\omega})$.
Then for all $k \in \mathbb N, \ k \geq 1,$ the following factors of ${\mathbf u_p}$ are bispecials:
$$s_{4k+1} = R(w_{4(k-1)+1})w_{4(k-1)}^{-1}w_{4(k-1)+3}w_{4(k-1)}^{-1}w_{4(k-1)+1},$$
$$s_{4k+3} = E(w_{4(k-1)+3})w_{4k-2}^{-1}w_{4k+1}w_{4k-2}^{-1}w_{4(k-1)+3}.$$ Moreover, the palindrome $s_{4k+1}$ is contained in the prefix $w_{4k+1}$ and has the bilateral extensions $1s_{4k+1}0$ and $0s_{4k+1}1$, and the $E$-palindrome $s_{4k+3}$ is contained in the prefix $w_{4k+3}$ and has the bilateral extensions $0s_{4k+3}0$ and~$1s_{4k+3}1$.
\end{lemma}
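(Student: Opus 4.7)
My plan is to verify each claim in the lemma by directly unfolding Lemma~\ref{lemma:consecutive_members}. I treat $s_{4k+1}$ in detail; the claim for $s_{4k+3}$ then follows by the analogous argument with the roles of $R$ and $E$ (and ``palindrome'' and ``$E$-palindrome'') interchanged.

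\textbf{Well-definedness and palindromicity of $s_{4k+1}$.} The word $w_{4k-4}$ is a palindromic prefix of $\mathbf{u_p}$, hence a prefix of both $w_{4k-3}$ and $w_{4k-1}$; using palindromicity of $w_{4k-1}$ and of $w_{4k-4}$ itself, $w_{4k-4}$ is also a suffix of $w_{4k-1}$ and a suffix of $R(w_{4k-3})$. Thus each of the four overlaps in the product defining $s_{4k+1}$ is exactly the word $w_{4k-4}$, and the expression unambiguously produces a single word. Palindromicity of $s_{4k+1}$ follows by reversing the product: the outer blocks $R(w_{4k-3})$ and $w_{4k-3}$ are reverses of each other, while the central block $w_{4k-1}$ and the overlap word $w_{4k-4}$ are both palindromes.

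\textbf{Locating $s_{4k+1}$ inside $w_{4k+1}$.} Lemma~\ref{lemma:consecutive_members} gives $w_{4k+1}=w_{4k}\,10\,E(w_{4k})$, $w_{4k}=w_{4k-1}\,w_{4k-4}^{-1}\,w_{4k-1}$, and one level down $w_{4k-3}=w_{4k-4}\,10\,E(w_{4k-4})$. Inside $w_{4k}$, the first copy of $w_{4k-1}$ ends with $R(w_{4k-3})$ (by palindromicity, applied to the prefix $w_{4k-3}$), while the second copy begins with $w_{4k-3}$; identifying the common overlap $w_{4k-4}$ shows that the suffix of $w_{4k}$ of length $|w_{4k-3}|+|w_{4k-1}|-|w_{4k-4}|$ equals $R(w_{4k-3})\,w_{4k-4}^{-1}\,w_{4k-1}$. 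Because $w_{4k}$ is a palindrome, $E(w_{4k})=\overline{w_{4k}}$, the letterwise complement, and since $w_{4k}$ starts with $w_{4k-4}$ the first $|w_{4k-4}|$ letters of $E(w_{4k})$ form $E(w_{4k-4})$. Hence the $|w_{4k-4}|+2$ letters of $w_{4k+1}$ immediately after $w_{4k}$ are $10\,E(w_{4k-4})$, precisely the tail of $w_{4k-3}$ beyond its prefix $w_{4k-4}$. Concatenating, $s_{4k+1}$ occurs in $w_{4k+1}$ starting at position $|w_{4k-1}|-|w_{4k-3}|$.

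\textbf{Bilateral extensions.} The letter of $w_{4k+1}$ immediately preceding this occurrence of $s_{4k+1}$ lies inside $w_{4k-1}$; by palindromicity it equals the letter of $w_{4k-1}$ at position $|w_{4k-3}|$, i.e., the letter of $\mathbf{u_p}$ just after its prefix $w_{4k-3}$. Since $w_{4k-2}$ is the $E$-palindromic closure of $w_{4k-3}\cdot 1$ (the directive sequence is $1^{\omega}$), this letter is $1$. Symmetrically, the letter immediately following $s_{4k+1}$ sits at position $|w_{4k-4}|$ of $E(w_{4k})=\overline{w_{4k}}$; it is the complement of the letter of $\mathbf{u_p}$ just after $w_{4k-4}$, which is again $1$, so the desired letter is $0$. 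Hence $1\,s_{4k+1}\,0\in\mathcal{L}(\mathbf{u_p})$. Because $R$ occurs infinitely often in $\Theta=(EERR)^{\omega}$, $\mathcal{L}(\mathbf{u_p})$ is closed under $R$, and applying $R$ to this factor produces $0\,s_{4k+1}\,1\in\mathcal{L}(\mathbf{u_p})$.

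\textbf{Main obstacle and the $s_{4k+3}$ case.} The most delicate step is the middle one: keeping the overlap bookkeeping consistent and distinguishing the palindromic prefixes ($w_{4k-4},w_{4k-1}$) from the $E$-palindromic ones ($w_{4k-3},w_{4k-2}$), so that the factorization $s_{4k+1}=R(w_{4k-3})\,w_{4k-4}^{-1}\,w_{4k-1}\,w_{4k-4}^{-1}\,w_{4k-3}$ fits inside $w_{4k+1}$ in the claimed position. Once this is secured, the extension step reduces to a two-letter verification. For $s_{4k+3}$ the same plan runs with the overlap word now being the $E$-palindrome $w_{4k-2}$, with the identities $w_{4k+3}=w_{4k+2}(010)^{-1}R(w_{4k+2})$ and $w_{4k+2}=w_{4k+1}w_{4k-2}^{-1}w_{4k+1}$ replacing the previous pair, and with closure of $\mathcal{L}(\mathbf{u_p})$ under $E$ producing the extensions $0\,s_{4k+3}\,0$ and $1\,s_{4k+3}\,1$.
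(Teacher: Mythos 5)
Your proof is correct and follows essentially the same route as the paper: unfold Lemma~\ref{lemma:consecutive_members} to locate $s_{4k+1}$ straddling the junction in $w_{4k+1}=w_{4k}10E(w_{4k})$, read off the bilateral extension $1s_{4k+1}0$, and obtain $0s_{4k+1}1$ from closure under $R$ (and dually for $s_{4k+3}$). The only cosmetic difference is that the paper pins down the trailing letter $0$ via the central factor $1w_{4(k-1)+1}0$ supplied by Lemma~\ref{lemma:strongBS}, whereas you compute both boundary letters directly by positional and complementation arguments.
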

\begin{proof}
Let us show the statement for $s_{4k+1}$. The proof for $s_{4k+3}$ is similar.
Using Lemma~\ref{lemma:consecutive_members} we can write $w_{4k+1} = w_{4k}10E(w_{4k}).$ The prefix $w_{4k}$ and the suffix $E(w_{4k})$ can be again rewritten as follows: $$w_{4k+1} = w_{4(k-1)+3}w_{4(k-1)}^{-1}w_{4(k-1)+3}10E(w_{4(k-1)+3}w_{4(k-1)}^{-1}w_{4(k-1)+3}).$$
 Thus $w_{4k+1}$ has $w_{4(k-1)+3}$ as prefix. The factor $w_{4(k-1)+3}$ has certainly $w_{4(k-1)+1}1$ as prefix. Since the factor $w_{4(k-1)+3}$ is a~palindrome, the factor $1R(w_{4(k-1)+1})$ is its suffix. Using the above form of $w_{4k+1}$, we know $1R(w_{4(k-1)+1})w_{4(k-1)}^{-1}w_{4(k-1)+3}$ is a~factor of $w_{4k+1}$.

 Thanks to Lemma~\ref{lemma:strongBS} we know that $1w_{4(k-1)+1}0$ is a~central factor of $w_{4k+1}$. Let us use again Lemma~\ref{lemma:consecutive_members} to rewrite $w_{4(k-1)+1}$: $$w_{4(k-1)+1} = w_{4(k-1)}10E(w_{4(k-1)}).$$ The already constructed factor $1R(w_{4(k-1)+1})w_{4(k-1)}^{-1}w_{4(k-1)+3}$ is therefore followed by $w_{4(k-1)}^{-1}w_{4(k-1)+1}0$. Consequently, we get the following factor of $w_{4k+1}$: Hence, $s_{4k+1}=R(w_{4(k-1)+1})w_{4(k-1)}^{-1}w_{4(k-1)+3}w_{4(k-1)}^{-1}w_{4(k-1)+1}$ is contained in the prefix $w_{4k+1}$ and it is easy to check that $s_{4k+1}$ is a~palindrome. We have so far found its bilateral extension $1s_{4k+1}0$. Using the fact that $\mathcal{L}(\mathbf{u_p})$ is closed under reversal, it follows that $0s_{4k+1}1 \in \mathcal{L}(\mathbf{u_p})$.
\end{proof}

\begin{example}
Let us write down the shortest two bispecials $s_\ell$ of $\mathbf{u_p}$:
$$s_5=011010110,$$
$$s_7=010101101011001010010101.$$
\end{example}

\begin{proposition}\label{proposition:weakBS}
The factors $s_{2k+1}$ are weak bispecials for all $k\geq 2$.
Moreover, there are no other weak bispecials in the language of ${\mathbf u_p}$ except for $s_{2k+1}$ and their $R$- and $E$-images.
\end{proposition}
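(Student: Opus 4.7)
The plan is to split the proof into two independent tasks: first, confirm that each $s_{2k+1}$ is in fact \emph{weak} (not merely bispecial); second, show that no other weak bispecials exist in $\mathcal{L}({\mathbf u_p})$ besides the $s_{2k+1}$ and their images under $R$ and $E$.

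For the first task, Lemma~\ref{lemma:BSs_k} already furnishes the bilateral extensions $1s_{4k+1}0$ and $0s_{4k+1}1$ of the palindrome $s_{4k+1}$, and analogously $0s_{4k+3}0$ and $1s_{4k+3}1$ of the $E$-palindrome $s_{4k+3}$. To deduce weakness I must rule out the complementary pair in each case, i.e.\ show that $0s_{4k+1}0,\,1s_{4k+1}1 \notin \mathcal{L}({\mathbf u_p})$ and $0s_{4k+3}1,\,1s_{4k+3}0 \notin \mathcal{L}({\mathbf u_p})$. My plan is to enumerate all occurrences of $s_{2k+1}$ in ${\mathbf u_p}$ by iterating the recursion in Lemma~\ref{lemma:consecutive_members}: every factor of ${\mathbf u_p}$ lies inside some prefix $w_n$, and the explicit expressions for $w_n$ in terms of earlier prefixes make it possible to locate all blocks of length $|s_{2k+1}|$ that occur in $w_n$. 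The palindromic (resp.\ $E$-palindromic) structure of $s_{2k+1}$ forces every such occurrence to sit at the centre of a copy of $w_{4k+1}$ (resp.\ $w_{4k+3}$) exactly as in the proof of Lemma~\ref{lemma:BSs_k}, and at those centred positions the bilateral extensions are precisely the diagonal ones already listed.

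For the second task I will invoke the heuristic described just before Example~\ref{ex:shortest_weakBS}: a weak bispecial arises whenever the longest $\overline{\vartheta_n}$-palindromic suffix of $w_{n-1}\delta_n$ is strictly longer than the longest $\vartheta_n$-palindromic suffix, and is then extended maximally on both sides. Combining Observation~\ref{obs:normtvar} with Lemma~\ref{lemma:consecutive_members}, I can identify this candidate suffix at each step $n$ explicitly and verify that the resulting weak bispecial is precisely $s_{2k+1}$ (the cases $k \leq 1$ being degenerate, which explains the restriction $k \geq 2$ in the statement). Closure of $\mathcal{L}({\mathbf u_p})$ under both $R$ and $E$, guaranteed because $\Theta=(EERR)^{\omega}$ contains both antimorphisms infinitely often, then accounts for the additional $R$- and $E$-images mentioned in the proposition.

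The main obstacle, and the step I expect to require the most care, is proving completeness in the second task: that every weak bispecial genuinely arises from the above heuristic and not from some other mechanism. My plan is to take an arbitrary weak bispecial $b$, consider its maximal $R$- or $E$-palindromic extension sharing the same centre, and show that this extension must coincide with a ``missed'' palindromic suffix appearing in the construction of some $w_n$; otherwise $b$ would force the existence of a palindromic or $E$-palindromic prefix of ${\mathbf u_p}$ not appearing among the $\widetilde{w_\ell}$, contradicting the normalized form of Observation~\ref{obs:normtvar} together with Theorem~\ref{thm:norm}.
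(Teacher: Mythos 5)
Your decomposition (weakness of the $s_{2k+1}$, then completeness) matches the shape of the paper's argument, but the substance of both halves is missing, and in the second half the proposed mechanism would not work. The paper's actual proof rests on a heavy apparatus classifying \emph{regular} versus \emph{irregular} occurrences of the prefixes $w_k$ and of their bilateral extensions (Lemmas~\ref{lemma:1w41} and~\ref{lemma:vyskyt_w_k}), which is then used in Lemma~\ref{lemma:nonprefixBS} to run an exhaustive case analysis on the minimal prefix $w_k$ containing a given factor $v$, according to $k \bmod 4$. That analysis is what lets one enumerate \emph{all} occurrences of a candidate factor and hence all its bilateral extensions. Your first task silently assumes this enumeration can be read off from Lemma~\ref{lemma:consecutive_members}; in fact the possibility of unexpected overlapping occurrences is exactly the difficulty, and your specific claim that every occurrence of $s_{2k+1}$ sits at the centre of a copy of $w_{4k+1}$ is false --- by Lemma~\ref{lemma:BSs_k}, $s_{4k+1}$ occurs inside $w_{4k+1}$ but not as its central factor (the central factor of $w_{4k+1}$ is $1w_{4(k-1)+1}0$ by Lemma~\ref{lemma:strongBS}).

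The more serious gap is in your completeness argument. The passage before Example~\ref{ex:shortest_weakBS} is presented in the paper as a heuristic for \emph{finding} weak bispecials, not as a characterization of them, and your proposed justification does not close the gap: a weak bispecial occurring at a non-prefix position does not ``force the existence of a palindromic or $E$-palindromic prefix not among the $\widetilde{w_\ell}$,'' so no contradiction with Observation~\ref{obs:normtvar} arises. Moreover, your scheme only examines palindromic and $E$-palindromic candidates, whereas a priori a weak bispecial need be neither; the paper must separately prove (first bullet of Lemma~\ref{lemma:nonprefixBS}) that a non-prefix factor with no pseudopalindromic structure is not bispecial at all, and must also dispose of prefix bispecials via Lemma~\ref{lemma:prefixBS} (they always have at least three bilateral extensions, hence are never weak) --- a case your proposal does not address. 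Without these ingredients the proof cannot be completed along the lines you sketch.
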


Let us postpone the proof of Proposition~\ref{proposition:weakBS} to a~separate subsection since it is long and technical, and provide instead the remaining steps to the proof of Theorem~\ref{thm:counterexample}.

In order to estimate the second difference of complexity, we need to determine the relation of lengths of weak and strong bispecials.

\begin{observation} \label{obs:nerovn}
Consider ${\mathbf u_p} = {\mathbf u}(1^{\omega}, (EERR)^{\omega})$ and $k \in \mathbb{N}, \ k\geq 2$. Then $$|s_{2k+1}| < |w_{2k+1}| < |s_{2k+3}|.$$
\end{observation}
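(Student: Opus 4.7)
The strategy is a direct length computation from the two preceding lemmas, split according to the residue of $2k+1$ modulo~$4$.

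First, from Lemma~\ref{lemma:consecutive_members} one reads off the recurrences $|w_{4k+1}| = 2|w_{4k}| + 2$, $|w_{4k+2}| = 2|w_{4k+1}| - |w_{4k-2}|$, $|w_{4k+3}| = 2|w_{4k+2}| - 3$ and $|w_{4k+4}| = 2|w_{4k+3}| - |w_{4k}|$, from which a trivial induction shows that $(|w_m|)_{m\geq 1}$ is strictly increasing. Lemma~\ref{lemma:BSs_k} similarly yields $|s_{4k+1}| = 2|w_{4(k-1)+1}| + |w_{4(k-1)+3}| - 2|w_{4(k-1)}|$ and $|s_{4k+3}| = 2|w_{4(k-1)+3}| + |w_{4k+1}| - 2|w_{4k-2}|$.

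For the left inequality $|s_{2k+1}| < |w_{2k+1}|$ I would substitute the $|w|$-recurrence into $|w_{2k+1}| - |s_{2k+1}|$ and simplify. Writing $2k+1 = 4j+1$ the difference becomes $3|w_{4(j-1)+3}| - 2|w_{4(j-1)+1}| + 2 \geq |w_{4(j-1)+1}| + 2 > 0$, and writing $2k+1 = 4j+3$ it becomes $3|w_{4j+1}| - 2|w_{4j-1}| - 3 \geq |w_{4j+1}| - 3$, which is positive for $j \geq 1$ since $|w_{4j+1}| \geq |w_5| = 22$. In both cases the sign is forced by the monotonicity of $(|w_m|)$.

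For the right inequality $|w_{2k+1}| < |s_{2k+3}|$ I split on the same residue. When $2k+1 = 4j+1$ (so $2k+3 = 4j+3$), using $|w_{4j-1}| = 2|w_{4j-2}| - 3$ the difference $|s_{4j+3}| - |w_{4j+1}|$ collapses to $2(|w_{4j-2}| - 3)$, which is positive for $j\geq 1$ since $|w_{4j-2}| \geq |w_2| = 4$. When $2k+1 = 4j+3$ (so $2k+3 = 4(j+1)+1$), using $|w_{4j+1}| = 2|w_{4j}| + 2$ the difference $|s_{4(j+1)+1}| - |w_{4j+3}|$ equals $2(|w_{4j}| + 2) > 0$. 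No step is genuinely hard; the only delicacy is keeping the index shifts consistent under the mod-$4$ case split and checking the base case $k = 2$, which is covered by the explicit prefixes computed at the start of the section.
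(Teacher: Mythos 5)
Your proof is correct and is exactly the intended argument: the paper states this as an Observation with no proof, and the only reasonable route is the direct length computation from Lemmas~\ref{lemma:consecutive_members} and~\ref{lemma:BSs_k} that you carry out. All four case computations check out numerically (e.g.\ $|s_5|=9<|w_5|=22<|s_7|=24$), and your two right-hand differences $2(|w_{4j-2}|-3)$ and $2(|w_{4j}|+2)$ agree with the formulas the paper itself records later in Observation~\ref{obs:length_weakBS}.
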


\begin{observation} \label{obs:comp}
Consider ${\mathbf u_p} = {\mathbf u}(1^{\omega}, (EERR)^{\omega})$. Then for all $n \in \mathbb N$ the following holds:
$$\Delta^2\mathcal{C}_{\mathbf{u_p}}(n) \geq \begin{cases}
2 & \text{if} \ n=|w_{2k+1}| \ \text{for some $k\geq 1$}; \\
-2 & \text{if} \ n=|s_{2k+1}| \ \text{for some $k \geq 2$}; \\
0  & \textrm{otherwise}.
\end{cases} $$
\end{observation}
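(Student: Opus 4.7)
The plan is to apply the second-difference formula~\eqref{complexity2diff}, according to which $\Delta^2\mathcal{C}_{\mathbf u_p}(n) = \sum_{w \in \mathcal{L}_n(\mathbf u_p)} B(w)$ with strong bispecials contributing $+1$, weak bispecials contributing $-1$, and all other factors contributing $0$. The task therefore reduces to a counting argument at each length, with Lemma~\ref{lemma:strongBS} supplying strong bispecials, Proposition~\ref{proposition:weakBS} classifying all weak bispecials, and Observation~\ref{obs:nerovn} ensuring that the two families of lengths $\{|w_{2k+1}|\}$ and $\{|s_{2k+1}|\}$ are disjoint and interlaced.

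The preparatory ingredient is a closure remark. Since the bi-sequence $\Theta=(EERR)^{\omega}$ contains both $R$ and $E$ infinitely often, the language $\mathcal{L}(\mathbf u_p)$ is closed under each of them, so the Klein four-group $\{\mathrm{id}, R, E, RE\}$ acts on $\mathcal{L}(\mathbf u_p)$ and sends strong (respectively weak) bispecials to strong (respectively weak) bispecials of the same length. On the binary alphabet no nonempty word can simultaneously satisfy $w=R(w)$ and $w=E(w)$, so every nonempty palindrome or $E$-palindrome has an orbit of size exactly~$2$ under this action.

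With these tools in hand the case analysis writes itself. If $n=|w_{2k+1}|$ for some $k\geq 1$, then by Lemma~\ref{lemma:strongBS} the prefix $w_{2k+1}$ is a strong bispecial which is either a palindrome ($k$ odd) or an $E$-palindrome ($k$ even) but not both, so its orbit furnishes a second strong bispecial of length $n$ distinct from $w_{2k+1}$; Observation~\ref{obs:nerovn} excludes any $|s_{2j+1}|$ from coinciding with $n$, hence Proposition~\ref{proposition:weakBS} leaves no weak bispecial at length $n$, and thus $\Delta^2\mathcal{C}_{\mathbf u_p}(n)\geq 2$. If $n=|s_{2k+1}|$ for some $k\geq 2$, then by Proposition~\ref{proposition:weakBS} every weak bispecial of length $n$ lies in the orbit of $s_{2k+1}$, which has cardinality at most $2$; the weak contribution is therefore at least $-2$ and any strong bispecials present only improve the bound. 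If $n$ lies in neither family, Proposition~\ref{proposition:weakBS} forbids any weak bispecial at length $n$, so $\Delta^2\mathcal{C}_{\mathbf u_p}(n)\geq 0$.

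The substantive work has already been discharged in the preceding lemmas; the only point I would be careful about when writing is the orbit-cardinality claim, since the lower bound of $2$ in the first case relies on $w_{2k+1}$ producing a genuinely distinct $R$- or $E$-image rather than being a fixed point of the full Klein group. The binary-alphabet incompatibility of $R$- and $E$-fixation makes this immediate, and is what I would highlight as the one nontrivial line of the proof.
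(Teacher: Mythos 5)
Your proof is correct and follows essentially the same route as the paper: apply the second-difference formula, obtain two strong bispecials at lengths $|w_{2k+1}|$ from Lemma~\ref{lemma:strongBS} together with the $R$- (resp.\ $E$-) image, and cap the weak contribution at $-2$ via the classification in Proposition~\ref{proposition:weakBS}. Your explicit remark that no nonempty binary word is fixed by both $R$ and $E$ (so the second strong bispecial is genuinely distinct) is a point the paper leaves implicit, but otherwise the arguments coincide.
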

\begin{proof}
We use Equation~\eqref{complexity2diff}.
For $n=|w_{2k+1}|$ we have at least two strong bispecials of $\mathbf{u_p}$: $w_{4\ell+1}$ and $R(w_{4\ell+1})$ (resp. $w_{4\ell+3}$ and $E(w_{4\ell+3})$) by Lemma~\ref{lemma:strongBS}. For  $n =|s_{2k+1}|$ we have exactly two weak bispecials of $\mathbf{u_p}$: $s_{4\ell+1}$ and $E(s_{4\ell+1})$ (resp. $s_{4\ell+3}$ and $R(s_{4\ell+3})$) by Proposition~\ref{proposition:weakBS}. Moreover, Proposition~\ref{proposition:weakBS} states that all other bispecials have at least three bilateral extensions.
\end{proof}

\begin{lemma} \label{lemma:vsechnyfakt}
Consider ${\mathbf u_p} = {\mathbf u}(1^{\omega}, (EERR)^{\omega})$. Let $k \geq 5$. Then $w_k$ contains all factors of $\mathbf{u_p}$ of length less than or equal to $|w_{k-5}|$, except possibly for the images by the antimorphisms $E$ and $R$, and the morphism $ER$. Further on, $w_{k+2}$ contains all factors of $\mathbf{u_p}$ of length less than or equal to $|w_{k-5}|$.
\end{lemma}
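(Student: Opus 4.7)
I propose a joint induction on $k$, using the recursive decomposition from Lemma~\ref{lemma:consecutive_members}. The base cases $k=5,6,7,8$ are settled by a direct (finite) computation, comparing the factor set of $w_k$ with the set of factors of ${\mathbf u_p}$ up to length $|w_{k-5}|$.

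For the inductive step of the first statement, fix a factor $v$ of ${\mathbf u_p}$ with $|v|\le |w_{k-5}|$ and choose some occurrence of $v$ (or of one of the images $E(v),R(v),ER(v)$) inside ${\mathbf u_p}$. The plan is to trace this occurrence back through the recursive decomposition. Each of the four cases of Lemma~\ref{lemma:consecutive_members} writes $w_j$ as a product of two building blocks separated by a short junction (either $10$, $010$, or an overlap equal to an earlier prefix), where one block is $w_{j-i}$ for a small $i$ and the other is its $\vartheta_j$-image with $\vartheta_j\in\{E,R\}$. At each level the chosen occurrence of $v$ either stays inside the first block (no change of image), stays inside the $\vartheta_j$-image block (the current image composes with $\vartheta_j$), or straddles the junction (in which case the occurrence already lies inside $w_k$). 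The combinatorial input that makes the five-step buffer tight is that any five consecutive letters of $\Theta=(EERR)^{\omega}$ contain both $E$ and $R$, so the compositions of antimorphisms produced along a descent of length five span the whole Klein four-group $\{\mathrm{id},E,R,ER\}$.

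For the second statement, let $v$ have length $|v|\le |w_{k-5}|$ and invoke the first statement at level $k$: at least one of $v,E(v),R(v),ER(v)$ is a factor of $w_k$. Since $w_k$ is a palindrome when $k\equiv 0,3\pmod 4$ and an $E$-palindrome when $k\equiv 1,2\pmod 4$, the factor set of $w_k$ is closed under $R$ or under $E$ respectively, so two of the four images of $v$ already occur in $w_k$. A short inspection of the cases of Lemma~\ref{lemma:consecutive_members} shows that $w_{k+2}$ contains, besides $w_k$ as prefix, a further antimorphic image of $w_k$, namely $E(w_k)$ in the palindromic case and $R(w_k)$ in the $E$-palindromic case. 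This supplies the remaining two images, so all four images of $v$ occur in $w_{k+2}$, and in particular $v$ itself.

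The main obstacle is the inductive step for the first statement: the length bound jumps from $|w_{k-6}|$ to $|w_{k-5}|$ between consecutive levels, so the inductive hypothesis alone does not cover the new length range. Overcoming it requires the five-level descent argument sketched above, carried out with a careful case analysis of the junction patterns in Lemma~\ref{lemma:consecutive_members} to verify that no factor of length in $(|w_{k-6}|,|w_{k-5}|]$ slips past all four junctions without being captured, up to $\{\mathrm{id},E,R,ER\}$, inside $w_k$.
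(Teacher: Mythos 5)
Your overall skeleton (descend through the decomposition of Lemma~\ref{lemma:consecutive_members}, finite check for the base, deduce the second statement from the first by collecting antimorphic images of $w_k$ inside $w_{k+2}$) matches the paper's, and your argument for the second statement is correct and is exactly what the paper means by ``take into account the form of the directive bi-sequence.'' But the first statement has a genuine gap at its only hard point: the straddling case. You assert that an occurrence straddling a junction ``already lies inside $w_k$,'' and the only justification you offer for the five-step buffer is that any five consecutive letters of $\Theta$ contain both $E$ and $R$, so the composed antimorphisms span $\{\mathrm{id},E,R,ER\}$. That observation is irrelevant here: since the conclusion is only ``up to images by $E$, $R$, $ER$,'' \emph{any} composition of $E$'s and $R$'s is acceptable, so the group-theoretic fact buys nothing, and it cannot explain why $5$ is the right offset (three consecutive letters of $(EERR)^{\omega}$ already contain both antimorphisms).

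The actual mechanism, which your sketch never supplies, is metric. Take $v$ with $|v|\le|w_{k-5}|$ and let $s>k$ be minimal with $v$ (up to images) occurring in $w_s$ but not in $w_{s-1}$. If $s\equiv 0,2\pmod 4$, then $w_s$ is two copies of $w_{s-1}$ overlapping in a central $w_{s-4}$, so a genuinely new factor must contain the bilateral extension of that central $w_{s-4}$; since $s\ge k+1$ gives $|w_{s-4}|+2>|w_{k-5}|\ge|v|$, this is impossible. If $s\equiv 1,3\pmod 4$, the junction is only $10$ or $010$, so short factors \emph{can} straddle it; here you must invoke Lemma~\ref{lemma:strongBS}, which says the center of $w_s$ carries a copy of the \emph{prefix} $w_{s-4}$, long enough (again by the growth $|w_{n+2}|\approx 4|w_n|$) to contain any straddling $v$ of length $\le|w_{k-5}|$ entirely; hence $v$ already occurs in $w_{s-4}$, contradicting minimality of $s$. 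Without this two-pronged case analysis — length contradiction at the long overlaps, absorption into the central prefix-copy at the short junctions — the claim that nothing ``slips past the junctions'' is unsupported, and your proof does not go through as written.
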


\begin{proof}
We will prove the first statement. The second one is its direct consequence -- it suffices to take into account the form of the directive bi-sequence. We will show that $w_s$ for $s \geq k \geq 5$ does not contain (except for $E$-, $R$- and $ER$-images) factors of length less than or equal to $|w_{k-5}|$ other than those ones that are contained in $w_{k}$. To obtain a~contradiction assume that $v$ is the first such factor and that $s$ is the smallest index such that $v$ is contained in $w_s$.
\begin{itemize}
\item
If $s = 4\ell$, then $w_s = w_{s-1}w_{s-4}^{-1}w_{s-1}$. The factor $v$ has to contain the central factor $1w_{s-4}1$ of $w_s$ (otherwise $v$ would be contained already in $w_{s-1}$), which is a~contradiction because $|1w_{s-4}1| > |v|$.
\item
If $s = 4\ell+1$, then $w_s = w_{s-1}10E(w_{s-1})$. By Lemma~\ref{lemma:strongBS}, the factor $w_{s-4}$ is a~central factor of $w_s$ and it has to contain the factor $v$ since $v$ is either a~suffix of $w_{s-1}1$ or a~prefix of $0E(w_{s-1})$ or $v$ contains the central factor $10$ (otherwise, $v$ or $E(v)$ would be contained already in $w_{s-1}$). It is however a~contradiction with the minimality of the index $s$.
\item The remaining two cases are analogous to the above ones.
\end{itemize}
\end{proof}

\begin{corollary}\label{coro:comp1diff}
Consider ${\mathbf u_p} = {\mathbf u}(1^{\omega}, (EERR)^{\omega})$. Then for all $n \geq 10$ the following holds:
$$\begin{array}{rl}
\Delta{\mathcal C}_{\mathbf u_p} (n) \geq 6 & \text{if $|w_{4i+1}|< n \leq |s_{4i+3}|$ or $|w_{4i+3}|< n \leq |s_{4i+5}|$ for some $i \geq 1$};\\
\Delta{\mathcal C}_{\mathbf u_p} (n) \geq 4 & \text{otherwise}.
\end{array}$$

\end{corollary}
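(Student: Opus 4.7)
The plan is to recover the first difference of complexity from the second difference by the telescoping identity $\Delta{\mathcal C}_{\mathbf{u_p}}(n+1) = \Delta{\mathcal C}_{\mathbf{u_p}}(n) + \Delta^2{\mathcal C}_{\mathbf{u_p}}(n)$, feeding in the lower bounds of Observation~\ref{obs:comp}. By Observation~\ref{obs:nerovn}, the lengths at which the second difference is nonzero interleave as $|w_3|<|s_5|<|w_5|<|s_7|<|w_7|<\cdots$; a direct look at the explicit prefixes gives $|w_3|=5$ and $|s_5|=9$, both strictly below $10$. So the induction on $n \geq 10$ starts in the "resting" interval $(|s_5|,|w_5|]=(9,22]$, between two consecutive bispecial events, and the pattern in the corollary is precisely the shape of $\Delta{\mathcal C}_{\mathbf{u_p}}$ as one alternately crosses an index of the form $|w_{2k+1}|$ (where $\Delta^2\geq 2$) and an index of the form $|s_{2k+1}|$ (where $\Delta^2\geq -2$).

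For the base case I would verify $\Delta{\mathcal C}_{\mathbf{u_p}}(10)\geq 4$ by listing the factors of $\mathbf{u_p}$ of lengths $10$ and $11$ from a sufficiently long prefix (the prefix $w_6$, or a small continuation of it, already determines enough of $\mathbf{u_p}$ by uniform recurrence). This is the only numerical input to the argument; after that, everything else is propagated by Observation~\ref{obs:comp}.

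For the inductive step I would walk through the intervals in the order dictated by Observation~\ref{obs:nerovn}. On any interval of the form $[|s_{2k+1}|+1,|w_{2k+1}|]$ or $[|w_{2k+1}|+1,|s_{2k+3}|]$, only the "otherwise" case of Observation~\ref{obs:comp} applies, hence $\Delta{\mathcal C}_{\mathbf{u_p}}$ is non-decreasing there. Crossing $n=|w_{2k+1}|$ increases $\Delta{\mathcal C}_{\mathbf{u_p}}$ by at least $2$, whereas crossing $n=|s_{2k+3}|$ decreases it by at most $2$. Starting from $\Delta{\mathcal C}_{\mathbf{u_p}}(10)\geq 4$ and alternating these $+2$ and $-2$ contributions yields $\Delta{\mathcal C}_{\mathbf{u_p}}(n)\geq 4$ on every resting interval and $\Delta{\mathcal C}_{\mathbf{u_p}}(n)\geq 6$ on every boosted interval $(|w_{4i+1}|,|s_{4i+3}|]$ and $(|w_{4i+3}|,|s_{4i+5}|]$ with $i\geq 1$, which is the statement of the corollary. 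The only genuine obstacle is the base case: all remaining work is a mechanical telescoping, and the fact that no further bispecial events hide in between the listed ones is ensured by Lemma~\ref{lemma:strongBS} and Proposition~\ref{proposition:weakBS} combined with Observation~\ref{obs:nerovn}.
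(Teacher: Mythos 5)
Your argument is correct and follows essentially the same route as the paper: the telescoping identity for $\Delta\mathcal{C}$, the lower bounds from Observation~\ref{obs:comp}, the interleaving of the lengths $|w_{2k+1}|$ and $|s_{2k+1}|$ from Observation~\ref{obs:nerovn}, and a single computational base case (the paper checks $\Delta\mathcal{C}(9)=6$ and uses $|s_5|=9$, which is equivalent to your $\Delta\mathcal{C}(10)\geq 4$). The only place you are looser than the paper is the justification of the finite check: uniform recurrence alone does not certify that $w_6$ already exhibits every factor of length $10$ and $11$, and the paper instead invokes Lemma~\ref{lemma:vsechnyfakt} to reduce the computation to the explicit prefix $w_{11}$ of length $1077$.
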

\begin{proof}
Let us recall that $\Delta\mathcal{C}(n+1)=\Delta\mathcal{C}(n)+\Delta^2\mathcal{C}(n)$ for all $n \in \mathbb N$. Since $|w_4|=10$, all factors of length $10$ are, according to Lemma~\ref{lemma:vsechnyfakt}, contained in the prefix $w_{11}$ of length $1077$. Checking this prefix by Sage~\cite{St} we determined $\Delta {\mathcal C}(9)=6$. The claim follows then by Observations~\ref{obs:nerovn} and~\ref{obs:comp} taking into account that $|s_5|=9$.
\end{proof}

\begin{proof}[Proof of Theorem~\ref{thm:counterexample}]
In order to get ${\mathcal C}(10)$ it suffices by Lemma~\ref{lemma:vsechnyfakt} to check the prefix $w_{11}$ of length $1077$ because $|w_4|=10$. Using the program Sage~\cite{St} we determined $\mathcal{C}(10) = 42$. It is then a~direct consequence of Corollary~\ref{coro:comp1diff} that ${\mathcal C}(n)>4n$ for all $n \geq 10$.

\end{proof}

\subsection{Proof of Proposition~\ref{proposition:weakBS}}
This section is devoted to quite a~long and technical proof of the fact that the only weak bispecials of $\mathbf{ u_p}$ are $s_{4k+1}$ and $s_{4k+3}$ and their $E$- and $R$-images for all $k \in \mathbb N, \ k \geq 1$.
We will put together several lemmas and observations to get finally the proof.

\begin{lemma}\label{lemma:prefixBS}
Let $v$ be a~prefix of ${\mathbf u_p} = {\mathbf u}(1^{\omega}, (EERR)^{\omega})$. If $v$ is a~bispecial, then $v$ has at least three bilateral extensions and $E(v), R(v), ER(v)$ has at least three bilateral extensions too.
\end{lemma}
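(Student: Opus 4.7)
The plan is to reduce the claim to a one–sided statement using closure properties of the language, then to handle bispecial prefixes case by case using Lemma~\ref{lemma:strongBS} and the recursion of Lemma~\ref{lemma:consecutive_members}.

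First, since $\Theta = (EERR)^{\omega}$ contains both $R$ and $E$ infinitely often, Property~1 of generalized pseudostandard words gives that $\mathcal{L}(\mathbf{u_p})$ is closed under both $R$ and $E$, hence also under $ER$. Because $R(avb) = b R(v) a$, $E(avb) = \overline{b} E(v) \overline{a}$ and $ER(avb) = \overline{a} ER(v) \overline{b}$ for $a,b \in \{0,1\}$, these antimorphisms induce bijections between the bilateral extensions of $v$ and those of $R(v)$, $E(v)$, and $ER(v)$. Consequently all four factors have the same cardinality of the set of bilateral extensions, and the lemma reduces to showing that every bispecial prefix $v$ of $\mathbf{u_p}$ has at least three bilateral extensions.

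Next, I would enumerate the possible bispecial prefixes. By Observation~\ref{obs:normtvar} the pseudopalindromic prefixes of $\mathbf{u_p}$ form the sequence $\widetilde{w_1}=1$, $\widetilde{w_2}=10$, $\widetilde{w_3}=101$, $\widetilde{w_n}=w_{n-2}$ for $n\ge 4$. The prefixes $w_{4k+1}$ and $w_{4k+3}$ are strong bispecials by Lemma~\ref{lemma:strongBS}, so they already have the maximal number $4$ of bilateral extensions, settling these cases immediately. For the short prefixes $1,10,101$, I would use Lemma~\ref{lemma:vsechnyfakt} to check by direct inspection of $w_{11}$ (which, by that lemma, already contains every short factor of $\mathbf{u_p}$) that each of them admits at least three bilateral extensions. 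For the long pseudopalindromic prefixes $w_{4k}$ and $w_{4k+2}$ with $k\ge 1$, I would exploit the recursion of Lemma~\ref{lemma:consecutive_members}: each such prefix occurs both as a prefix of the next strong bispecial $w_{4k+1}$ (resp.\ $w_{4k+3}$) and as a factor of it placed symmetrically around the central $10$ (resp.\ $010$) block; combining these occurrences with the closure of $\mathcal{L}(\mathbf{u_p})$ under $R$ or $E$ exhibits three distinct bilateral extensions.

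Finally, for a prefix $v$ that is \emph{not} pseudopalindromic, I would argue that $v$ is not bispecial at all, so the lemma is vacuous on such prefixes. The argument uses Lemma~\ref{lemma:consecutive_members} to trace the occurrences of $v$ inside the iterates $w_n$: if $v$ is longer than the longest pseudopalindromic prefix it contains, the next letter following $v$ in $\mathbf{u_p}$ is forced to be the same at every occurrence, so $v$ is not right special. The main obstacle is precisely this last step — ruling out right-speciality of non-pseudopalindromic prefixes — which requires a careful pattern-matching inside the recursion of Lemma~\ref{lemma:consecutive_members} together with the avoidance of $000$ and $111$ implied by the normalized form in Observation~\ref{obs:normtvar}; the cases $w_{4k}$ and $w_{4k+2}$ for small $k$ must be checked by hand because the recursion becomes cleanly applicable only from some index on.
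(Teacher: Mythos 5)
Your opening reduction (closure of $\mathcal{L}(\mathbf{u_p})$ under $E$, $R$ and $ER$ makes the four images have equally many bilateral extensions) is correct and matches the paper's closing remark. But the core of your plan diverges from the paper and contains a fatal gap: you propose to classify the bispecial prefixes, claiming that every prefix which is not a palindrome or an $E$-palindrome fails to be right special, so that only the pseudopalindromic prefixes need to be examined. That claim is false. Take the prefix $v=1010110$ of length $7$: it is neither a palindrome nor an $E$-palindrome ($R(v)=0110101$, $E(v)=1001010$), yet $v1=10101101$ is a prefix of $\mathbf{u_p}$ while $10101100$ occurs in $w_5=1010110101100101001010$ starting at position $6$, so $v$ is right special; moreover $11010110$ occurs in $w_5$ and $01010110$ is a prefix of $s_7$, so $v$ is left special. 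Hence $v$ is a non-pseudopalindromic bispecial prefix, and your case analysis simply omits it. The step you yourself flag as ``the main obstacle'' is therefore not a technical difficulty to be overcome by pattern-matching: the statement it is supposed to establish is wrong, and the auxiliary appeal to avoidance of $000$ and $111$ is nowhere justified by Observation~\ref{obs:normtvar}.

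The paper avoids this entirely with a short uniform argument that never needs to know which prefixes are bispecial. Let $a$ be the letter with $va$ a prefix of $\mathbf{u_p}$, and choose a palindromic prefix $w_k$ and an $E$-palindromic prefix $w_\ell$ both containing $va$ as a prefix. Then $aR(v)$ is a suffix of $w_k$ and $\overline{a}E(v)$ is a suffix of $w_\ell$; since $\Delta=1^{\omega}$, both are followed by $1$, and applying $R$ and $E$ respectively yields $1va\in\mathcal{L}(\mathbf{u_p})$ and $0va\in\mathcal{L}(\mathbf{u_p})$. If $v$ is bispecial it also admits some extension $bv\overline{a}$, giving three distinct bilateral extensions. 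You should replace your classification attempt with an argument of this kind; as it stands, your proof does not go through.
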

\begin{proof}
Denote $a$ the letter for which $va$ is a~prefix of $\mathbf{u_p}$. We can certainly find $k, \ell \in \mathbb N$ such that $va$ is a~prefix of $w_k=R(w_k)$ and $w_\ell=E(w_\ell)$. Then $aR(v)$ is a~suffix of $w_k$ and $\overline{a}E(v)$ is a~suffix of $w_\ell$. By the construction of $\mathbf{u_p}$, the words $aR(v)1$ and $\overline{a}E(v)1$ belong to the language of $\mathbf{u_p}$. Since the language is closed under $E$ and $R$, it follows that $1va$ and $0va$ are factors of $\mathbf{u_p}$ too. Since $v$ is a~bispecial, $v$ has to have a~bilateral extension $bv\overline{a}$ for some $b\in \{0,1\}$. Hence, $v$ has at least three bilateral extensions. The rest of the proof follows by application of the antimorphisms $E, R$ and the morphism $ER$.
\end{proof}

In order to detect all weak bispecial factors, we need to describe all occurrences of $w_k=\vartheta(w_k)$ and $\overline{\vartheta}(w_k)$ and of some of their bilateral extensions. To manage that task, we will distinguish between regular and irregular occurrences.

Let $v$ be a~factor of $\mathbf{u_p}$.
Every element of $\{v, E(v), R(v), ER(v)\}$ is called \emph{an image of} $v$.
Let us define \emph{occurrences (of the images of $v$) generated by a~particular occurrence} $i$ of $v$. Let $k$ be the minimal index such that $w_k$ contains the factor $v$ at the occurrence $i$. Since $w_{k}$ is a~$\vartheta$-palindrome, it contains $\vartheta(v)$ symmetrically with respect to the center of $w_k$. If the corresponding occurrence $j$ of $\vartheta(v)$ is larger than $i$, we say that the occurrence $j$ is generated by the occurrence $i$ of $v$. Assume $w_\ell$ contains occurrences $i_1, \ldots, i_s$ of the images of $v$ generated by the particular occurrence $i$ of $v$. In order to get all occurrences of the images of $v$ generated by the particular occurrence $i$ of $v$ in $w_{\ell+1}$, we proceed in the following way. The prefix $w_{\ell+1}$ is a~$\vartheta$-palindrome for some $\vartheta \in \{E, R\}$, and therefore contains symmetrically with respect to its center occurrences $j_1, \ldots, j_s$ of $v_1, \ldots, v_s$ that are $\vartheta$-images of images of $v$ at the occurrences $i_1, \ldots, i_s$. Putting all occurrences
$i_1, \ldots, i_s, j_1, \ldots j_s$ together, we obtain all occurrences generated by the particular occurrence $i$ of $v$ in $w_{\ell +1}$.

We say that an occurrence of $v$ is \emph{regular} if it is generated by the very first occurrence of any image of $v$ in $\mathbf{u_p}$. Otherwise, we call the occurrence of $v$ \emph{irregular}.

\begin{example}\label{ex:occurrences}
Consider $v=110$. Its images are: $110, 100, 011, 001$.
The first occurrence of an image of $v$ is $i=3$ of $011$ in the palindrome $w_4= 101\underline{011}0101$.
Hence, the occurrence $i=4$ of $v=110$ in $w_4= 1010\underline{110}101$ is regular. (It is the $R$-image of $011$ in $w_4$.)
However, for instance the occurrence $i=9$ of $v=110$ in the $E$-palindrome $w_5=101011010\underline{110}0101001010$ is irregular. (It is not the $E$-image of any image of $v$ at a~regular occurrence in $w_4$.)
\end{example}

\begin{observation} \label{obs:ekvivalence}
Let $v$ be a~factor of $\mathbf{u_p}$. Then $v$ has only regular occurrences in $\mathbf{u_p}$ if and only if any element of $\{v, E(v), R(v), ER(v)\}$ has only regular occurrences in $\mathbf{u_p}$.
\end{observation}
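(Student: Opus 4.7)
The plan is to show that ``regularity'' is really a property of the entire orbit under the iterated symmetrizations, and that every such orbit contains occurrences of all four images of $v$; the observation then falls out. First, the set $\{v, E(v), R(v), ER(v)\}$ is the orbit of $v$ under the Klein four-group $\{\mathrm{id}, E, R, ER\}$ (since $E$ and $R$ are commuting involutions on $\{0,1\}^{*}$), so it coincides with the set of images of any of its four elements. Hence ``the very first occurrence of any image'' is the same pair $(v_{0}, p_{0})$ whichever of the four we single out as $v$, and the set $G$ of (image, position) pairs generated from it by the construction preceding the observation is common to all four versions of the definition; the regular occurrences of each image are exactly those elements of $G$ whose first coordinate equals that image. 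Viewing the construction as partitioning all (image, position) pairs into orbits, $G$ is one such orbit and every other orbit is disjoint from it, so regularity is an orbit invariant.

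The heart of the argument is the claim that every orbit contains occurrences of all four images of $v$. If an orbit contains $(u, i)$ and $k$ is the minimal index with $w_{k}$ containing $u$ at $i$, then tracing the ``keep old pairs, add their symmetric images'' rule shows that for each $\ell \ge k$ the orbit contains a pair whose first coordinate has the form $\vartheta_{i_{m}} \cdots \vartheta_{i_{1}}(u)$ for every choice of $k \le i_{1} < \cdots < i_{m} \le \ell$ — namely the branch of the generation tree that chooses to symmetrize at exactly these steps. Since $E$ and $R$ are commuting involutions, subset-compositions of them realize the whole Klein four-group $\{\mathrm{id}, E, R, ER\}$ as soon as both $E$ and $R$ appear among $\vartheta_{k}, \vartheta_{k+1}, \ldots$; by Observation~\ref{obs:normtvar} this is the case for every $k$, as $\widetilde{\Theta} = RERE(RREE)^{\omega}$ contains each of $E$ and $R$ infinitely often.

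Combining these facts: if $v$ has only regular occurrences, then no irregular orbit can contain a $v$-occurrence, yet the previous paragraph forces every orbit to contain one, so no irregular orbit exists and $E(v)$, $R(v)$, $ER(v)$ also have only regular occurrences. The reverse implication is immediate. The main obstacle I foresee is formalizing the orbit structure cleanly — verifying that the various ``subset-composition branches'' really correspond to distinct pairs genuinely realized in $G$ and reconciling the asymmetric ``if $j > i$'' clause in the first step of the definition with the subsequent fully symmetric updates — but the underlying group-theoretic skeleton is clean, and the normalized form of $\widetilde{\Theta}$ provided by Observation~\ref{obs:normtvar} removes any doubt about both antimorphisms being available at every stage.
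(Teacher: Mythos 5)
The paper itself offers no proof of this observation---it is stated as self-evident---so the question is only whether your argument is complete. Your two structural ingredients are the right ones: $\{v,E(v),R(v),ER(v)\}$ is the orbit of $v$ under the Klein four-group generated by the commuting involutions $E$ and $R$, so ``the very first occurrence of any image of $v$'' and hence the generated set $G$ of regular occurrences is literally the same object for all four images; and since $E$-palindromic and $R$-palindromic prefixes $w_k$ both occur infinitely often, iterating the mirror maps from any single occurrence eventually produces occurrences of all four images of $v$.

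The gap is exactly the point you flag and then set aside: the assertion that the generation classes partition the occurrences, so that ``regularity is an orbit invariant.'' This does not follow formally from the definition. The set $G$ is built as $G_{\ell+1}=G_{\ell}\cup\sigma_{\ell+1}(G_{\ell})$, where $\sigma_{\ell+1}$ is the mirror about the centre of $w_{\ell+1}$; because the recurrences of Lemma~\ref{lemma:consecutive_members} make consecutive prefixes overlap (e.g.\ in $w_{4k+2}=w_{4k+1}w_{4k-2}^{-1}w_{4k+1}$ the two copies of $w_{4k+1}$ share a central block of length $|w_{4k-2}|$), nothing in the definition prevents a later step $\sigma_{m'}$ from depositing \emph{new} elements of $G$ at positions lying inside an earlier prefix $w_{m}$, $m<m'$, whenever $|v|$ is shorter than the overlap. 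Consequently $G$ restricted to occurrences inside $w_{m}$ need not be $\sigma_{m}$-invariant, and it is conceivable a priori that the mirror image of an irregular occurrence of $E(v)$ lands on a regular occurrence of $v$; your inference ``no irregular orbit can contain a $v$-occurrence, hence no irregular orbit exists'' needs precisely the disjointness you did not establish. Closing this requires a structural argument about where occurrences outside $G$ can sit relative to the centres of the $w_{\ell}$ (in effect what Lemmas~\ref{lemma:1w41} and~\ref{lemma:vyskyt_w_k} carry out for the specific factors the paper needs), so the step you defer as ``formalizing the orbit structure cleanly'' is the actual mathematical content of the observation rather than a formality.
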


\begin{lemma} \label{lemma:1w41}
Consider ${\mathbf u_p} = {\mathbf u}(1^{\omega}, (EERR)^{\omega})$. Let $k \in \mathbb{N}$. Assume the factors $w_{4k}$ and $w_{4k+2}$ have only regular occurrences in $\mathbf{u_p}$. Then the following statements hold:
\begin{itemize}
\item All irregular occurrences of the factor $1w_{4k}1$ in $\mathbf{u_p}$ are generated by its occurrences as the suffix of the prefix $w_{4\ell}1$ for all $\ell > k$. Moreover, the first regular occurrence of $1w_{4k}1$ is as the central factor of the prefix $w_{4(k+1)}$.
\item All irregular occurrences of the factor $0w_{4k+2}1$ in $\mathbf{u_p}$ are generated by its occurrences as the suffix of the prefix $w_{4\ell+2}1$ for all $\ell > k$. Moreover, the first regular occurrence of $0w_{4k+2}1$ is as the central factor of the prefix $w_{4(k+1)+2}$.
\item All irregular occurrences of the factor $1w_{4k+1}0$ in $\mathbf{u_p}$ are generated by its occurrences as the central factor of the prefix $w_{4\ell+1}$ for all $\ell > k+1$. Moreover, the first regular occurrence of $1w_{4k+1}0$ is as the central factor of the prefix $w_{4(k+1)+1}$.
\item All irregular occurrences of the factor $0w_{4k+3}0$ in $\mathbf{u_p}$ are generated by its occurrences as the central factor of the prefix $w_{4\ell+3}$ for all $\ell > k+1$. Moreover, the first regular occurrence of $0w_{4k+3}0$ is as the central factor of the prefix $w_{4(k+1)+3}$.
\end{itemize}
\end{lemma}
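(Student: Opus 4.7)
The plan is to treat the four bullet points in parallel, since each follows the same scheme: first exhibit the claimed first regular occurrence directly from the recursive formulas of Lemma~\ref{lemma:consecutive_members}, then classify every remaining occurrence of the bilateral extension as either a propagated image of that first regular occurrence or as one of the claimed irregular suffix-type copies. For $1w_{4k+1}0$ and $0w_{4k+3}0$ the first regular occurrence is already supplied by Lemma~\ref{lemma:strongBS}. For $1w_{4k}1$ I would use $w_{4(k+1)} = w_{4k+3}\, w_{4k}^{-1}\, w_{4k+3}$: because the palindrome $w_{4k+3}$ starts with $w_{4k}$ and ends with $R(w_{4k})=w_{4k}$, the splicing leaves $w_{4k}$ at the exact centre of $w_{4(k+1)}$; in the right copy this central $w_{4k}$ is immediately followed by the letter after the prefix $w_{4k}$, which is $1$ since $\Delta=1^{\omega}$, and by palindromicity of $w_{4(k+1)}$ the left letter is $1$ as well. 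For $0w_{4k+2}1$ I would argue analogously with $w_{4(k+1)+2} = w_{4(k+1)+1}\, w_{4k+2}^{-1}\, w_{4(k+1)+1}$: in the right copy $w_{4k+2}$ is still followed by $1$, while the letter preceding $w_{4k+2}$ in the left copy is forced by $E$-palindromicity of $w_{4(k+1)+1}$ to be $E(1)=0$.

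Propagation of regular occurrences is then automatic: every subsequent pseudopalindromic closure step reproduces the existing centres and adjoins their $\vartheta$-images, so once $awb$ sits centrally in $w_j$ it generates an infinite chain of regular occurrences in $w_{j+1}, w_{j+2},\ldots$, which are precisely the occurrences generated by the first regular one in the sense defined before the lemma.

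The substantive step is the classification of all remaining occurrences. Fix an occurrence of, say, $1w_{4k}1$ in $\mathbf{u_p}$ and let $w_s$ be the shortest prefix containing it. The embedded $w_{4k}$ is an occurrence of $w_{4k}$ in $\mathbf{u_p}$ and, by hypothesis, is regular; by minimality of $s$ it must cross the junction produced by the recurrence $w_s=w_{s-1}\cdot\text{tail}$ of Lemma~\ref{lemma:consecutive_members}. I would then split on $s \bmod 4$ and, for each class, use the explicit form of the tail to enumerate the few possible alignments of $w_{4k}$ at the junction together with the letters flanking them. For $s$ a multiple of $4$ the splicing $w_s = w_{s-1} w_{s-4}^{-1} w_{s-1}$ places the palindrome $w_{s-4}$ (with prefix $w_{4k}$ when $s-4>4k$) symmetrically about the centre; together with the letters dictated by $\Delta=1^{\omega}$ this is exactly the suffix occurrence of $1w_{4k}1$ inside $w_{4\ell}1$ advertised in the statement. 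For the other residue classes the junction alignments either reproduce the central regular occurrence, produce its $E$- or $R$-image (hence a regular occurrence of an image of $1w_{4k}1$), or impose a letter different from $1$ on one side and can be discarded. The three remaining bullet points are handled identically with the corresponding antimorphism; for $1w_{4k+1}0$ and $0w_{4k+3}0$ no suffix-of-$w_{j}1$ contribution appears because the forced directive letter on the right of the junction is $1$, incompatible with the required right letter $0$.

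The main obstacle will be the bookkeeping in this case analysis: tracking which antimorphism governs $w_s$ for each residue class modulo $4$, matching the letters that the $R$- or $E$-palindromic structure of $w_s$ forces on each side of the junction against the required bilateral context, and verifying that no further junction alignment of a regular occurrence of $w_{4k}$ or $w_{4k+2}$ survives beyond the ones enumerated. Each subcase reduces to an elementary comparison of letters, but producing a complete and transparent list of these comparisons is where the length and technicality of the argument will lie.
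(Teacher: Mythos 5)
Your overall strategy is the same as the paper's: obtain the first regular occurrence of each bilateral extension from Lemma~\ref{lemma:consecutive_members} and Lemma~\ref{lemma:strongBS}, then classify every further occurrence by locating the minimal prefix $w_s$ containing it and doing a case analysis on $s\bmod 4$ at the junction of the recurrence. However, the sketch misassigns where the irregular occurrences actually come from, and this is the substance of the lemma, not deferrable bookkeeping. For $1w_{4k}1$ you claim the advertised suffix occurrences inside $w_{4\ell}1$ arise in the case $s\equiv 0\pmod 4$. They do not: for $s=4m$ with $m>k+1$ any occurrence not contained in one of the two copies of $w_{s-1}$ must contain the central factor $1w_{s-4}1$, whose length exceeds $|1w_{4k}1|$, so \emph{no} new occurrence of $1w_{4k}1$ appears at those junctions (and the occurrence of $1w_{4k}1$ as a prefix of the central $1w_{s-4}1$ lies entirely inside the first copy of $w_{s-1}$, hence is not new). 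The irregular suffix-type occurrences in fact appear at the steps $s=4m+1$: there $w_s=w_{4m}10E(w_{4m})$, the suffix $1w_{4k}$ of $w_{4m}$ is completed by the freshly inserted letter $1$ of the tail, and this occurrence coincides with the prefix $1w_{4k}1$ of the central factor $1w_{4k+1}0$ of $w_s$ --- which is exactly how the paper identifies it.

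The treatment of the third and fourth bullets has a matching problem. You assert that for $1w_{4k+1}0$ and $0w_{4k+3}0$ ``no suffix-of-$w_j1$ contribution appears,'' suggesting all occurrences are propagated images of the first regular one. But the lemma (and the paper's proof) says these factors \emph{do} have irregular occurrences: $1w_{4k+1}0$ reappears as the central factor of every $w_{4\ell+1}$ with $\ell>k+1$, straddling the newly inserted ``$10$'' at that step; such an occurrence is its own $E$-image under the symmetry of $w_{4\ell+1}$ and is not the image of any occurrence inside $w_{4\ell}$, so it is not generated by the first regular occurrence and cannot be filed under ``propagation.'' Your taxonomy (regular propagation, suffix-type irregulars, discarded alignments) has no slot for these fresh central occurrences. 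Finally, a smaller point: exhibiting $1w_{4k}1$ as the central factor of $w_{4(k+1)}$ does not yet show this is the \emph{first} (hence regular) occurrence; one must also rule out earlier occurrences of $1w_{4k}1$ and of its image $0E(w_{4k})0$, which the paper does by tracking the bilateral extensions of $w_{4k}$ and $E(w_{4k})$ through $w_{4k+1},w_{4k+2},w_{4k+3}$ using the regularity hypothesis --- this is where that hypothesis is really needed, not merely to force the occurrence to cross a junction.
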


\begin{proof}
We will prove two of the four statements.
\begin{itemize}
\item Let us show the statement for $1w_{4k}1$. The statement for $0w_{4k+2}1$ is an analogy, we leave it thus for the reader. Using Lemma~\ref{lemma:consecutive_members} we know that $w_{4(k+1)} = w_{4k+3}w_{4k}^{-1}w_{4k+3}$. It is easy to see that the bilateral extension of the central factor $w_{4k}$ is $1w_{4k}1$. This bilateral extension occurs in $w_{4(k+1)}$ exactly once. Let us explain why: The factor $w_{4k}$ has only regular occurrences in $\mathbf{u_p}$, therefore $w_{4k+1}$ contains $w_{4k}1$ as prefix and $0E(w_{4k})$ as suffix. Further on, $w_{4k+2}$ contains moreover $0E(w_{4k})1$ and $0w_{4k}1$, and $w_{4k+3}$ contains in addition $1E(w_{4k})0$ and $1w_{4k}0$. Consequently, $0E(w_{4k})0$ is not contained in $w_{4(k+1)}$ and the first occurrence of $1w_{4k}1$ in $w_{4(k+1)}$ is necessarily regular.

Let us study occurrences of $1w_{4k}1$ in the whole word $\mathbf{u_p}$. All regular occurrences of $1w_{4k}1$ are generated by the first occurrence of $1w_{4k}1$ as the central factor of the prefix $w_{4(k+1)}$. We will show that all irregular occurrences of $1w_{4k}1$ are generated by the occurrences of $1w_{4k}1$ as the suffix of the prefix $w_{4\ell}1$ for all $\ell> k$. It is evident that $1w_{4k}1$ is a~suffix of the prefix $w_{4\ell}1$ and the factor $1w_{4k}1$ is here at an irregular occurrence.

For a~contradiction assume that $1w_{4k}1$ occurs at an irregular position that is not generated by the occurrence of $1w_{4k}1$ as the suffix of the prefix $w_{4\ell}1$ for any $\ell> k$. Such an irregular occurrence may as well be generated by an occurrence of $0E(w_{4k})0$. Let $w_s$ be the first prefix that contains such an irregular occurrence of $1w_{4k}1$ (resp. of $0E(w_{4k})0$). Let $m\geq k+1$. If $s = 4m+1$, then $w_s = w_{s-1}10E(w_{s-1})$ and according to Lemma~\ref{lemma:strongBS} the prefix $w_s$ has $1w_{4k+1}0$ as its central factor. The irregular occurrence of $1w_{4k}1$ (resp. of $0E(w_{4k})0$) has to be contained in this factor. But $1w_{4k+1}0$ contains $1w_{4k}1$ only as a~prefix and this occurrence corresponds at the same time to the suffix of $w_{4m}1$, which is a~contradiction. If $s = 4m+2$, then $w_s = w_{s-1}w_{s-4}^{-1}w_{s-1}$ and the irregular occurrence of $1w_{4k}1$ (resp. of $0E(w_{4k})0$) has to contain the central factor of $w_s$: $1w_{s-4}1$. However, $|1w_{s-4}1| > |1w_{4k}1|=|0E(w_{4k})0|$, which is a~contradiction. Let $s = 4m+3$, then $w_s = w_{s-1}(010)^{-1}R(w_{s-1})$. Using Lemma~\ref{lemma:strongBS} the prefix $w_s$ has $w_{4k+3}$ as its central factor. The irregular occurrence of $1w_{4k}1$ (resp. of $0E(w_{4k})0$) has to contain the central factor of $w_s$: $10101$. Consequently, $1w_{4k}1$ (resp. $0E(w_{4k})0$) has to be contained in $w_{4k+3}$, which is a~contradiction.
If $s = 4m+4$, then $w_s = w_{s-1}w_{s-4}^{-1}w_{s-1}$ and the irregular occurrence of $1w_{4k}1$ (resp. of $0E(w_{4k})0$) has to contain the central factor of $w_s$: $1w_{s-4}1$. However, $|1w_{s-4}1| > |1w_{4k}1|=|0E(w_{4k})0|$, which is a~contradiction.
\item Let us show the statement for $1w_{4k+1}0$. The fourth statement is its analogy. The first and thus regular occurrence of $1w_{4k+1}0$ is by Lemma~\ref{lemma:strongBS} and by the assumption on regular occurrences of $w_{4k}$ as the central factor of the prefix $w_{4(k+1)+1}$.

Firstly, let us show that for all $\ell>k$ every occurrence of $1w_{4k+1}0$ (resp. of $0R(w_{4k+1})1$) in the prefixes $w_{4\ell+2}, w_{4\ell+3}, w_{4\ell+4}$ is already generated by an occurrence of an image of $1w_{4k+1}0$ in the prefix $w_{4\ell+1}$.

By Lemma~\ref{lemma:consecutive_members} we can write $w_{4\ell+2}=w_{4\ell+1}w_{4\ell-2}^{-1}w_{4\ell+1}$ and $0w_{4\ell-2}1$ is its central factor. If $w_{4\ell+2}$ contains an occurrence of $1w_{4k+1}0$ (resp. of $0R(w_{4k+1})1$) that is not generated by an occurrence of an image of $1w_{4k+1}0$ in $w_{4\ell+1}$, then this occurrence has to contain $0w_{4\ell-2}1$. This is not possible because for all $\ell >k$ we have
$|0w_{4\ell-2}1| > |1w_{4k+1}0|=|0R(w_{4k+1})1|$. Next, $w_{4\ell+3} = w_{4\ell+2}(010)^{-1}R(w_{4\ell+2})$. The central factor is $10101$ and moreover we know by Lemma~\ref{lemma:strongBS} that $w_{4k+3}$ is also a~central factor of $w_{4\ell+3}$. If $w_{4\ell+3}$ contains an occurrence of $1w_{4k+1}0$ (resp. of $0R(w_{4k+1})1$) that is not generated by an occurrence of an image of $1w_{4k+1}0$ in $w_{4\ell+1}$, then this occurrence has to contain the factor $10101$. Then such an occurrence of $1w_{4k+1}0$ (resp. of $0R(w_{4k+1})1$) is necessarily contained in $w_{4k+3}$. This is not possible since the factor $1w_{4k+1}0$ occurs for the first time in $w_{4(k+1)+1}$ and its $R$-image even later. Finally we have $w_{4\ell+4} = w_{4\ell+3}w_{4\ell}^{-1}w_{4\ell+3}$ and $1w_{4\ell}1$ is its central factor. If $w_{4\ell+4}$ contains an occurrence of $1w_{4k+1}0$ (resp. of $0R(w_{4k+1})1$) that is not generated by an occurrence of an image of $1w_{4k+1}0$ in $w_{4\ell+1}$, then this occurrence has to contain the factor $1w_{4\ell}1$. This is again not possible because of lengths of those factors.

Secondly, let us show that the occurrence of the factor $1w_{4k+1}0$ as the central factor of the prefix $w_{4\ell+1}$ is the only occurrence of $1w_{4k+1}0$ (resp. of $0R(w_{4k+1})1$) in the prefix $w_{4\ell+1}$ that is not generated by any occurrence of an image of $1w_{4k+1}0$ in the prefix $w_{4\ell}$. We have $w_{4\ell+1} = w_{4\ell}10E(w_{4\ell})$. Using Lemma~\ref{lemma:strongBS} it follows that $1w_{4k+1}0$ is the central factor of $w_{4\ell+1}$ and this occurrence is not generated by any image of $1w_{4k+1}0$ contained in $w_{4\ell}$. In order to have another occurrence of $1w_{4k+1}0$ (resp. of $0R(w_{4k+1})1$) in the prefix $w_{4\ell+1}$ so that it is not generated by any image of $1w_{4k+1}0$ in the prefix $w_{4\ell}$, it has to be either a~suffix of $w_{4\ell}1$ or a~prefix of $0E(w_{4\ell})$ or it has to contain the central factor of $w_{4\ell+1}$: $10$. However such an occurrence of $1w_{4k+1}0$ (resp. of $0R(w_{4k+1})1$) has to be contained in the longer central factor of $w_{4\ell+1}$: $w_{4(k+1)+1}$.
This is not possible because $1w_{4k+1}0$ occurs in $w_{4(k+1)+1}$ exactly once as the central factor and this occurrence has been already discussed. Altogether we have described all occurrences of the factor $1w_{4k+1}0$ in $\mathbf{u_p}$. All irregular occurrences of $1w_{4k+1}0$ are thus generated by the occurrences of $1w_{4k+1}0$ as the central factor of $w_{4\ell+1}$, $\ell >k+1$.
\end{itemize}
\end{proof}

\begin{lemma} \label{lemma:vyskyt_w_k}
Consider ${\mathbf u_p} = {\mathbf u}(1^{\omega}, (EERR)^{\omega})$ and $k \in \mathbb N$.
\begin{enumerate}
\item All occurrences of $w_{4k}$ and $E(w_{4k})$ are regular for $k\geq 1$.
\item All occurrences of $w_{4k+2}$ and $R(w_{4k+2})$ are regular.
\item All irregular occurrences of $w_{4k+1}$ and $R(w_{4k+1})$ are generated by the occurrences of $w_{4k+1}$ as the central factor of the prefixes $w_{4\ell+1}$ for all $\ell>k$.
\item All irregular occurrences of $w_{4k+3}$ and $E(w_{4k+3})$ are generated by the occurrences of $w_{4k+3}$ as the central factor of the prefixes $w_{4\ell+3}$ for all $l>k$.
\end{enumerate}
\end{lemma}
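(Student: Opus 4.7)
The plan is to prove all four statements simultaneously by strong induction on $k$, combining the structural recurrence of Lemma~\ref{lemma:consecutive_members}, the strong-bispecial information of Lemma~\ref{lemma:strongBS}, and the description of bilateral extensions given by Lemma~\ref{lemma:1w41}. The base case is settled by direct inspection of the first several prefixes (up to roughly $w_8$): each image of $w_1,w_2,w_3,w_4$ can be located by hand and the claimed classification into regular and irregular occurrences verified.

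For the inductive step, I would assume that the four statements hold for all indices strictly smaller than $k$. Then statements~(1) and~(2) at those earlier indices are exactly the regularity hypotheses required by Lemma~\ref{lemma:1w41}, so the occurrences of the bilateral extensions $1w_{4j}1$, $0w_{4j+2}1$, $1w_{4j+1}0$, and $0w_{4j+3}0$ for $j<k$ are already completely described. To establish the four statements at index $k$, I would consider each of $w_{4k}$, $w_{4k+1}$, $w_{4k+2}$, $w_{4k+3}$ in turn, pick an occurrence and the first prefix $w_s$ that contains it, and use Lemma~\ref{lemma:consecutive_members} to split cases according to $s \bmod 4$. Any occurrence not already present in $w_{s-1}$ must overlap the small central material introduced when passing from $w_{s-1}$ to $w_s$: the block $1w_{s-4}1$ when $s=4m$ or $s=4m+2$, the pair $10$ when $s=4m+1$, and the central fragment $10101$ of $w_{s-4}$ when $s=4m+3$.

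For statements~(1) and~(2), the goal is to show that every such new occurrence of $w_{4k}$ or $w_{4k+2}$ coincides with the $\vartheta$-palindromic reflection of an earlier regular occurrence, hence is itself regular. Statements~(3) and~(4) must additionally accommodate the irregular central occurrences produced by Lemma~\ref{lemma:strongBS}, which places $w_{4k+1}$ (resp.\ $w_{4k+3}$) as a central factor of every later $w_{4\ell+1}$ (resp.\ $w_{4\ell+3}$) with $\ell>k$; these are precisely the irregular occurrences listed in the lemma, and no others can appear because any putative extra occurrence would again have to cross one of the enumerated central blocks and so reduce to one of the cases handled in the proof of Lemma~\ref{lemma:1w41}.

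The main obstacle will be the fine length comparisons needed to rule out unexpected copies of $w_{4k}$ or $w_{4k+2}$ crossing one of the small central blocks listed above. One has to verify, case by case in $s \bmod 4$, that each central block is either too long to admit a new non-symmetric embedding of $w_{4k}$ or $w_{4k+2}$, or too short to accommodate such a factor at all — using that $|w_{s-4}|$ grows with $s$ while the blocks $10$, $010$, $10101$ remain short. Once these cases are dispatched, statements~(3) and~(4) follow with little extra work, since the only additional contribution is the central-factor occurrence already identified by Lemma~\ref{lemma:strongBS}. The argument mirrors very closely, and reuses the bookkeeping of, the proof of Lemma~\ref{lemma:1w41}.
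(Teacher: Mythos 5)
Your proposal is correct and follows essentially the same route as the paper: induction with a base case checked by hand in the prefixes up to $w_8$, a case split on $s \bmod 4$ using the recurrences of Lemma~\ref{lemma:consecutive_members}, the observation that any genuinely new occurrence must cross the short central block introduced at step $s$, and reliance on Lemma~\ref{lemma:1w41} (whose regularity hypotheses are supplied by the earlier induction statements) together with Lemma~\ref{lemma:strongBS} to identify the central-factor occurrences as the only irregular ones. The paper phrases the inductive step slightly more compactly by reducing the regularity of an occurrence of $w_{4(k+1)}$ to the regularity of its central factor $1w_{4k}1$, but this is the same mechanism you describe.
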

\begin{proof}
We will prove only the first and the third statement. The other statements may be proved analogously.
Let us proceed by induction. Assume the first and the third statement hold for some $k \in \mathbb N$.
\begin{enumerate}
\item[1.]
We will first prove that $w_{4(k+1)}$ has only regular occurrences in $\mathbf{u_p}$. Putting together  Lemma~\ref{lemma:1w41}, the induction assumption and the fact that $1w_{4k}1$ and $w_{4(k+1)}$ are both palindromes, it follows that
the occurrence of $w_{4(k+1)}$ is regular if and only if the occurrence of its central factor $1w_{4k}1$ is regular.

Therefore the factor $w_{4(k+1)}$ at an irregular occurrence has to have as its central factor $1w_{4k}1$ at an irregular occurrence, i.e., by Lemma~\ref{lemma:1w41} generated by an occurrence of $1w_{4k}1$ as the suffix of the prefix $w_{4\ell}1$ for some $\ell >k$.
Assume $w_{4(k+1)}$ is at such an occurrence that its central factor $1w_{4k}1$ is the suffix of the prefix $w_{4\ell}1$. By Lemma~\ref{lemma:strongBS} we know that $w_{4\ell+1}=w_{4\ell}10E(w_{4\ell})$ has the central factor $1w_{4k+1}0$. Therefore $w_{4(k+1)}$ having the suffix $1w_{4k}1$ of $w_{4\ell}1$ as its central factor has to contain $1w_{4k+1}0$. This is a~contradiction because using Lemma~\ref{lemma:1w41} and the induction assumption, one can see that the factor $1w_{4k+1}0$ occurs for the first time in $w_{4(k+1)+1}$.

By Observation~\ref{obs:ekvivalence} it follows that $E(w_{4(k+1)})$ has only regular occurrences in $\mathbf{u_p}$ too.

Let us conclude the proof for $k=1$. We will show that $w_4$ and $E(w_4)$ have only regular occurrences in $\mathbf{u_p}$. It is easy to check that $w_8$ contains only regular occurrences of $w_4$ and $E(w_4)$. See Appendix for the form of $w_8$.
Assume $k>8$ and $w_k$ contains the first irregular occurrence of $w_4$ (resp. of $E(w_4)$).
For $k=4m+1$, we have $w_k=w_{k-1}10E(w_{k-1})$. By Lemma~\ref{lemma:strongBS} the factor $w_5$ is a~central factor of $w_k$, hence the irregular occurrence of $w_4$ (resp. of $E(w_4)$) has to be contained in $w_5$, which is a~contradiction. If $k=4m+2$, then $w_k=w_{k-1}w_{k-4}^{-1}w_{k-1}$. The irregular occurrence of $w_4$ (resp. of $E(w_4)$) has to contain the central factor $0w_{k-4}1$, which is a~contradiction.
For $k=4m+3$, we have $w_k=w_{k-1}(010)^{-1}R(w_{k-1})$. The central factor of $w_k$ is $w_7$ by Lemma~\ref{lemma:strongBS}. Therefore the irregular occurrence of $w_4$ (resp. of $E(w_4)$) has to be contained in $w_7$, which is a~contradiction. Finally for $k=4m+4$, the argument is similar as for $k=4m+2$.
Consequently, $w_4$ and $E(w_4)$ have only regular occurrences in $\mathbf{u_p}$.
 \item[3.] We will first prove that all irregular occurrences of $w_{4(k+1)+1}$ are generated by its occurrences as the central factor of the prefixes $w_{4\ell+1}$ for all $\ell >k+1$.
     Since by Lemma~\ref{lemma:1w41} and by the induction assumption, the factor $1w_{4k+1}0$ occurs for the first time as the central factor of the prefix $w_{4(k+1)+1}$ and since both $1w_{4k+1}0$ and $w_{4(k+1)+1}$ are $E$-palindromes and $w_{4(k+1)+1}$ does not contain $0R(w_{4k+1})1$, it follows that the occurrence of $w_{4(k+1)+1}$ is regular if and only if the occurrence of its central factor $1w_{4k+1}0$ is regular.

     We will thus consider irregular occurrences of $1w_{4k+1}0$. We know using Lemma~\ref{lemma:1w41} and the induction assumption that every irregular occurrence of $1w_{4k+1}0$ (resp. of $0R(w_{4k+1})1$) is generated by an occurrence of $1w_{4k+1}0$ as the central factor of $w_{4\ell+1}$ for $\ell>k+1$. It is then a~direct consequence that all irregular occurrences of $w_{4(k+1)+1}$ are generated by the occurrences of $w_{4(k+1)+1}$ as the central factor of the prefixes $w_{4\ell+1}$ for all $\ell>k+1$.

The statement for $R(w_{4(k+1)+1})$ follows using Observation~\ref{obs:ekvivalence}.

It remains to prove the statement for $k=0$.
We have to show that all irregular occurrences of $w_1$ in $\mathbf{u_p}$ are generated by the occurrences of $w_1$ as the central factor of the prefixes $w_{4\ell+1}$ for all $\ell\geq 1$. It is easy to show that the first irregular occurrence of $w_1$ is the occurrence as the central factor of the prefix $w_5$. Let $m > 5$ and let $w_m$ contain the first irregular occurrence of $w_1$ (resp. of $R(w_1)$) that is not generated by the occurrence of $w_1$ as the central factor of the prefix $w_5$. If $m = 4\ell+2$, then $w_m = w_{m-1}w_{m-4}^{-1}w_{m-1}$. Then the irregular occurrence of $w_1$ (resp. of $R(w_1)$) has to contain the central factor of $w_m$: $0w_{m-4}1$, which is not possible. If $m = 4\ell+3$, then $w_m = w_{m-1}(010)^{-1}R(w_{m-1})$. By Lemma~\ref{lemma:strongBS} the factor $w_3$ is a~central factor of $w_m$. Then the irregular occurrence of $w_1$ (resp. of $R(w_1)$) has to be contained in $w_3$, which is a~contradiction. If $m = 4\ell+4$, then $w_m = w_{m-1}w_{m-4}^{-1}w_{m-1}$. Then the irregular occurrence of $w_1$ (resp. of $R(w_1)$) has to contain the central factor of $w_m$: $1w_{m-4}1$, which is not possible. If $m = 4\ell+5$, then $w_m = w_{m-1}10E(w_{m-1})$. By Lemma~\ref{lemma:strongBS} the factor $w_5$ is a~central factor of $w_m$. Then the irregular occurrence of $w_1$ (resp. of $R(w_1)$) has to be contained in $w_5$. It follows that $w_1$ has to be the central factor of $w_{4\ell+5}, \ \ell \geq 1$.

\end{enumerate}

\end{proof}

In the proof of the last and essential lemma, we will make use of the following observation.
\begin{observation}\label{obs:p_k}
Consider ${\mathbf u_p} = {\mathbf u}(1^{\omega}, (EERR)^{\omega})$.
For all $k\in \mathbb N, \ k\geq 1$, let:
$$p_{4k+1} = w_{4(k-1)+3}w_{4(k-1)}^{-1}w_{4(k-1)+1},$$
 $$p_{4k+3} = w_{4k+1}w_{4k-2}^{-1}w_{4(k-1)+3}.$$
 Then the factor $p_{4k+1}$ is a~suffix of $s_{4k+1}$ and a~prefix of $w_{4k}$
 and similarly the factor $p_{4k+3}$ is a~suffix of $s_{4k+3}$ and a~prefix of $w_{4k+2}$.
\end{observation}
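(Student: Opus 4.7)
The plan is to deduce Observation~\ref{obs:p_k} directly from the explicit formulas already established, without any induction. The statement has two halves for each of $p_{4k+1}, p_{4k+3}$: a suffix claim and a prefix claim. The suffix part is essentially tautological; the prefix part follows from Lemma~\ref{lemma:consecutive_members}.

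For the suffix claims, I would simply match the definition of $p_{4k+1}$ against the expression for $s_{4k+1}$ supplied by Lemma~\ref{lemma:BSs_k},
$$s_{4k+1} = R(w_{4(k-1)+1})\,w_{4(k-1)}^{-1}\,w_{4(k-1)+3}\,w_{4(k-1)}^{-1}\,w_{4(k-1)+1}.$$
Its last three blocks $w_{4(k-1)+3}\,w_{4(k-1)}^{-1}\,w_{4(k-1)+1}$ are by definition $p_{4k+1}$, so the suffix claim is immediate. The same inspection identifies $p_{4k+3}$ as the last three blocks of $s_{4k+3}$.

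For the prefix claims, I would invoke the fourth recurrence of Lemma~\ref{lemma:consecutive_members} with index $k-1$ and the second recurrence as written, obtaining
$$w_{4k} = w_{4(k-1)+3}\,w_{4(k-1)}^{-1}\,w_{4(k-1)+3} \quad \text{and} \quad w_{4k+2} = w_{4k+1}\,w_{4k-2}^{-1}\,w_{4k+1}.$$
Because the prefixes $(w_j)_{j \geq 0}$ form a chain, $w_{4(k-1)+1}$ is a prefix of $w_{4(k-1)+3}$ and $w_{4(k-1)+3}$ is a prefix of $w_{4k+1}$. Replacing the last block $w_{4(k-1)+3}$ (resp.\ $w_{4k+1}$) in the two displays above by the corresponding shorter initial segment yields exactly $p_{4k+1}$ as a prefix of $w_{4k}$ and $p_{4k+3}$ as a prefix of $w_{4k+2}$.

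The only point needing a small sanity check rather than genuine work is the legitimacy of the cancellations $w_{4(k-1)}^{-1}$ and $w_{4k-2}^{-1}$: one must verify that $w_{4(k-1)}$ really is a suffix of $w_{4(k-1)+3}$ and $w_{4k-2}$ a suffix of $w_{4k+1}$. Both follow from the palindromic structure recalled after Lemma~\ref{lemma:consecutive_members}: $w_{4(k-1)}$ is a palindrome that is a prefix of the palindrome $w_{4(k-1)+3}$ and hence also a suffix; $w_{4k-2}$ is an $E$-palindrome that is a prefix of the $E$-palindrome $w_{4k+1}$, and applying $E$ shows it is again a suffix. Beyond this, the observation is pure algebraic unpacking, so I expect no real obstacle and a very short proof.
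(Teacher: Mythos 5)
Your argument is correct, and it is exactly what the paper intends: the statement is labelled an Observation and given no proof, being treated as an immediate unpacking of the formulas in Lemmas~\ref{lemma:consecutive_members} and~\ref{lemma:BSs_k}. Your reading of $p_{4k+1}$ (resp.\ $p_{4k+3}$) as the last three blocks of $s_{4k+1}$ (resp.\ $s_{4k+3}$) and your substitution of the final block of $w_{4k}=w_{4k-1}w_{4k-4}^{-1}w_{4k-1}$ and $w_{4k+2}=w_{4k+1}w_{4k-2}^{-1}w_{4k+1}$ by a shorter prefix, together with the (pseudo)palindromicity check that legitimizes the cancellations, is precisely the short verification the authors suppress.
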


\begin{lemma}\label{lemma:nonprefixBS}
Let $v$ be a~factor, but not an image of a~prefix of ${\mathbf u_p} = {\mathbf u}(1^{\omega}, (EERR)^{\omega})$.
The following statements hold:
\begin{itemize}
\item If $v$ is neither an $E$-palindrome, nor a~palindrome, then $v$ is not bispecial in $\mathbf u_p$.
\item If $v$ is an $E$-palindrome or a~palindrome, but different from $s_{4k+1}$, $E(s_{4k+1})$, $s_{4k+3}$, $R(s_{4k+3})$ for all $k\geq 1$, then $v$ is either not a~bispecial, or it is a~bispecial with three bilateral extensions.
\item If $v$ is equal to one of the bispecials $s_{4k+1}, \ E(s_{4k+1}), \ s_{4k+3}, \ R(s_{4k+3})$ for some $k\geq 1$, then $v$ is a~weak bispecial.
\end{itemize}
\end{lemma}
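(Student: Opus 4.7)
The plan is to track every occurrence of $v$ (and of its images $E(v), R(v), ER(v)$) inside the prefixes $w_k$, using the complete classification of regular versus irregular occurrences provided by Lemma~\ref{lemma:vyskyt_w_k} together with the description of occurrences of the bilateral extensions $1w_{4k}1$, $0w_{4k+2}1$, $1w_{4k+1}0$, $0w_{4k+3}0$ given by Lemma~\ref{lemma:1w41}. The point is that once we know exactly where each image of $v$ sits in $\mathbf{u_p}$, the left and right neighbours of each occurrence are essentially forced by the recursive formulas in Lemma~\ref{lemma:consecutive_members}, so the number of distinct bilateral extensions of $v$ can be computed.

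For the first bullet, I would argue that if $v$ is neither a palindrome nor an $E$-palindrome, then $v$ and its antimorphic images $E(v), R(v)$ are all distinct factors, and none of them can straddle the central position of any prefix $w_k$ symmetrically (that would force $v$ to be palindromic or $E$-palindromic). Hence every occurrence of $v$ lies in a rigid position inside some copy of $w_j$ embedded in $w_k$, where the left and right neighbours of $v$ are uniquely determined by the construction. Combining all such rigid contexts yields at most one bilateral extension of $v$, so $v$ cannot be bispecial.

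For the second bullet, when $v$ is a palindrome or $E$-palindrome, I would split according to length. If $v$ is short enough to sit strictly inside a strong-bispecial central factor of some $w_{4\ell+1}$ or $w_{4\ell+3}$ (Lemma~\ref{lemma:strongBS}), then the four bilateral extensions of $v$ all occur, so either $v$ is not bispecial on one side (only one letter can extend it there) or it has at least three bilateral extensions. Otherwise $v$ straddles a central factor of some $w_k$; by Lemma~\ref{lemma:1w41} this straddling occurrence is essentially unique up to images, and a direct comparison with the construction of $s_{4k+1}, s_{4k+3}$ in Lemma~\ref{lemma:BSs_k} shows that the only palindromic/$E$-palindromic factors that manage to acquire two, but not three, bilateral extensions are precisely $s_{4k+1}, E(s_{4k+1}), s_{4k+3}, R(s_{4k+3})$.

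For the third bullet, Lemma~\ref{lemma:BSs_k} already supplies two bilateral extensions; it remains to exclude the other two. Using Observation~\ref{obs:p_k}, the suffix $p_{4k+1}$ of $s_{4k+1}$ is a prefix of $w_{4k}$, so every occurrence of $s_{4k+1}$ in $\mathbf{u_p}$ must be followed by the letter that follows $p_{4k+1}$ in $w_{4k}$, which is uniquely determined by the ambient occurrence of $w_{4k}$. Combined with the occurrence description of Lemma~\ref{lemma:vyskyt_w_k} and the form of the directive bi-sequence, this forces the right extension to correlate with the left extension exactly as in $1s_{4k+1}0$ and $0s_{4k+1}1$; an identical argument based on $p_{4k+3}$ handles $s_{4k+3}$. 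The main obstacle will be the careful bookkeeping in bullet two, since we need to distinguish between palindromic factors that acquire a third extension by lying inside a strong bispecial central factor and those that genuinely have only two extensions, relying crucially on the precise form of the weak bispecials $s_{4k+1}, s_{4k+3}$ constructed earlier.
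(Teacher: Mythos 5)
Your overall strategy---locate the first prefix $w_k$ containing an image of $v$, then use Lemmas~\ref{lemma:1w41} and~\ref{lemma:vyskyt_w_k} to control all further (regular and irregular) occurrences and read off the possible bilateral extensions---is exactly the paper's strategy, and your treatment of the third bullet via Observation~\ref{obs:p_k} matches the paper's. However, two steps in your sketch are genuinely flawed. First, in the opening bullet you claim that for non-$\vartheta$-palindromic $v$ every occurrence is ``rigid'' so that combining all contexts yields \emph{at most one} bilateral extension. This is false: the paper's case analysis shows that when $v$ is a suffix of $p_{4\ell+1}$ (hence of the weak bispecial $s_{4\ell+1}$), the irregular occurrence of $s_{4\ell+1}$ supplies a \emph{second} bilateral extension $av\overline{b}$ in addition to $avb$. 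The lemma's conclusion survives only because both extensions share the same left letter $a$, so $v$ fails to be left special; your argument as written does not see this second extension and would either overlook it or, once noticed, leave you without a reason why $v$ is still not bispecial. Second, in the middle bullet the inference ``if $v$ sits strictly inside a strong-bispecial central factor of some $w_{4\ell+1}$ then all four bilateral extensions of $v$ occur'' is invalid: the four bilateral extensions of $w_{4\ell+1}$ are letters adjacent to $w_{4\ell+1}$ itself and say nothing about the letters adjacent to a proper subfactor $v$; indeed most factors properly contained in $w_{4\ell+1}$ are not even special. The paper instead shows that $v$ must contain either the central factor of $w_k$ or a specific long suffix of $w_{k-1}1$ (otherwise $v$ would occur in an earlier prefix, contradicting minimality of $k$), pins down the unique first occurrence from that, and only then counts which additional extensions the irregular occurrences contribute. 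You would need to replace both of the flawed steps with this kind of occurrence-by-occurrence bookkeeping for the proof to go through.
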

\begin{proof}
We will find the minimal index $k$ such that $w_k$ contains an image of the factor $v$. Let the first occurrence of an image of $v$ correspond without loss of generality to $v$. Let us discuss the possible cases.

No such factors are contained in $w_3$.

\begin{enumerate}
\item Let $k=4\ell, \ \ell \geq 1$. We have $w_{4\ell}=w_{4(\ell-1)+3}w_{4\ell-4}^{-1}w_{4(\ell-1)+3}$. The bilateral extension of the central factor is $1w_{4\ell-4}1$. According to Lemma~\ref{lemma:1w41} the factor $1w_{4\ell-4}1$ occurs for the first time as the central factor of $w_{4\ell}$ and moreover $w_{4\ell}$ does not contain $0E(w_{4\ell-4})0$. Since $v$ is not contained in $w_{4(\ell-1)+3}$, the factor $v$ has to contain the central factor of $w_{4\ell}$: $1w_{4\ell-4}1$. Hence $v$ occurs once in $w_{4\ell}$. Let us denote $avb$ the corresponding bilateral extension of $v$, where $a,b \in \{0,1\}$. If $v$ is a~$\vartheta$-palindrome, then thanks to the unique occurrence of the palindrome $1w_{4\ell-4}1$ and the absence of $0E(w_{4\ell-4})0$ in $w_{4\ell}$, the factor $v$ has to be the central palindrome of $w_{4\ell}$. In this case its bilateral extension is $ava$, i.e. $a=b$. Moreover, $v$ is distinct from $s_{4m+1}$ and $E(s_{4m+1})$ for all $m \in \mathbb N$ because these palindromes do not have the central factor $1w_{4\ell-4}1$.

    We will now study irregular occurrences of $v$.
    It is not difficult to see that regular occurrences of $1w_{4\ell-4}1$ are factors of regular occurrences of $v$ and $R(v)$. Therefore we have to look at irregular occurrences of $1w_{4\ell-4}1$.
    The first such occurrence is as the suffix of the prefix $w_{4\ell}1$. Then $v$ cannot contain the central factor $1w_{4(l-1)+1}0$ of $w_{4\ell+1}=w_{4\ell}10E(w_{4\ell})$ since by Lemmas~\ref{lemma:1w41} and~\ref{lemma:vyskyt_w_k} the factor $1w_{4(l-1)+1}0$ occurs for the first time in $w_{4\ell+1}$ while $v$ occurs already in $w_{4\ell}$. Consequently and since $v$ contains $1w_{4\ell-4}1$ once, $v$ has to be contained in the suffix of $s_{4\ell+1}$: $p_{4\ell+1}$ defined in Observation~\ref{obs:p_k}. If $v$ is not a~suffix of $p_{4\ell+1}$, then since $p_{4\ell+1}$ is a~prefix of $w_{4\ell}$, we do not get any new bilateral extension of $v$. If $v$ is a~suffix of $p_{4\ell+1}$ and thus of the bispecial $s_{4\ell+1}$, the word $\mathbf{u_p}$ contains the bilateral extension $av\overline{b}$ too. All other irregular occurrences of $1w_{4\ell-4}1$ are generated by its occurrences as the suffix of the prefix $w_{4m}1, \ m>\ell$. It is not difficult to see that such occurrences do not provide any new bilateral extension of~$v$. Altogether we have found for $v$ that is not an $R$ palindrome the bilateral extension $avb$ and possibly $av\overline{b}$. Thus, such a~factor $v$ is not bispecial. If $v$ is a~palindrome, then its bilateral extension is either only $ava$ and it is not a~bispecial, or its bilateral extensions are $ava, av\overline{a}$ and by the fact that the language is closed under reversal also $\overline{a}va$. Therefore $v$ is a~bispecial with three bilateral extensions.
\item Let $k=4\ell+1, \ \ell \geq 1$. The factor $v$ occurs for the first time in $w_{4\ell+1}=w_{4\ell}10E(w_{4\ell})$.
Therefore $v$ contains either the central factor $1w_{4(\ell-1)+1}0$ of $w_{4\ell+1}$ or $v$ has to contain at least the suffix $1w_{4(\ell-1)+3}1$ of $w_{4\ell}1$. If $v$ does not contain neither $1w_{4(\ell-1)+1}0$ nor $1w_{4(\ell-1)+3}1$, then $v$ itself is contained in $p_{4\ell+1}$ defined in Observation~\ref{obs:p_k}. However, $p_{4\ell+1}$ is a~prefix of $w_{4\ell}$, therefore $v$ would be contained already in $w_{4\ell}$.

If $v$ contains $1w_{4(\ell-1)+1}0$, then $v$ occurs in $w_{4\ell+1}$ once since $w_{4\ell+1}$ contains
$1w_{4(\ell-1)+1}0$ only once by Lemmas~\ref{lemma:1w41} and~\ref{lemma:vyskyt_w_k}. If $v$ contains $1w_{4(\ell-1)+3}1$, then $v$ occurs in $w_{4\ell+1}$ only once too, as one can easily check using Lemma~\ref{lemma:vyskyt_w_k}. Let $avb$ denote the corresponding bilateral extension of $v$. If $v$ is a~$\vartheta$-palindrome, then two cases are possible:
If $v$ contains $1w_{4(\ell-1)+1}0$, then $v$ is an $E$-palindromic central factor of $w_{4\ell+1}$. Then $v$ is not equal to $s_{4m+3}$ or $R(s_{4m+3})$ for any $m\in \mathbb N$ because neither $s_{4m+3}$ nor $R(s_{4m+3})$ is a~central factor of $w_{4\ell+1}$. The bilateral extension of $v$ is then $av\overline{a}$. If $v$ contains $1w_{4(\ell-1)+3}1$, then $v$ has to be a~palindrome with the central factor $1w_{4(\ell-1)+3}1$. The longest such palindrome in $w_{4\ell+1}$ is $s_{4\ell+1}$. If $v=s_{4\ell+1}$, then its bilateral extension is $av\overline{a}$. If $v$ is a~shorter palindrome,  i.e., a~central factor of $s_{4\ell+1}$, then its bilateral extension is $ava$.

%

It is not difficult to see that no irregular occurrence of any image of $v$ is contained in $w_{4\ell+1}$. Consider irregular occurrences of images of $v$ first in $w_{4\ell+2}=w_{4\ell+1}w_{4\ell-2}^{-1}w_{4\ell+1}$. The first irregular occurrence of an image of $v$ has to contain the central factor $0w_{4\ell-2}1$ of $w_{4\ell+2}$, which is not possible because $0w_{4\ell-2}1$ occurs by Lemmas~\ref{lemma:1w41} and~\ref{lemma:vyskyt_w_k} for the first time in $w_{4\ell+2}$, and moreover $w_{4\ell+2}$ does not contain $1R(w_{4\ell-2})0$. Thus $w_{4\ell+2}$ does not contain any irregular occurrence of any image of $v$. Similarly, no irregular occurrence of an image of $v$ is contained in $w_{4\ell+3}=w_{4\ell+2}(010)^{-1}R(w_{4\ell+2})$. The image of $v$ cannot contain the central factor $0w_{4(\ell-1)+3}0$ because this factor occurs for the first time in $w_{4\ell+3}$ and its $E$-image even later. The image of $v$ cannot contain the suffix $0w_{4\ell-2}1$ of $w_{4\ell+2}1$ because $0w_{4\ell-2}1$ occurs for the first time in $w_{4\ell+2}$ and its $R$-image even later. This implies however that the image of $v$ is contained in $p_{4\ell+3}$ defined in Observation~\ref{obs:p_k}. And since $p_{4\ell+3}$ is a~prefix of $w_{4\ell+2}$, such occurrence of the image of $v$ is regular.
Consider an image of $v$ has an irregular occurrence in $w_{4\ell+4}$. Then the image of $v$ has to contain its central factor $1w_{4\ell}1$, which occurs however for the first time in $w_{4\ell+4}$ and its $E$-image even later. Therefore it is not possible.
No new irregular occurrences can appear in larger prefixes: for $s>\ell$, the prefixes $w_{4s+2}$ and $w_{4s+4}$ has too long central factors that $v$ has to contain, while $w_{4s+1}$ and $w_{4s+3}$ have central factors $w_{4\ell+1}$ (resp. $w_{4\ell+3}$) and these cases have been already discussed.

If $v$ is not a~$\vartheta$-palindrome, then the only bilateral extension of $v$ is $avb$, thus $v$ is not a~bispecial. If $v$ is an $E$-palindrome, then its only bilateral extension is $av\overline{a}$ and we do not get any new bilateral extension by application of $E$. Hence $v$ is not a~bispecial. If $v$ is a~palindrome, but distinct from $s_{4\ell+1}$, then its bilateral extension is $ava$ and we do not get any new bilateral extension by application of $R$. Finally, if $v=s_{4\ell+1}$, then its bilateral extension is $av\overline{a}$ and by application of $R$ we get $\overline{a}va$, thus $v$ is a~weak bispecial.
\item Let $k=4\ell+2$. This case is analogous to the first one.
\item Let $k=4\ell+3$. This case is analogous to the second one.
\end{enumerate}
\end{proof}

\begin{proof}[Proof of Proposition~\ref{proposition:weakBS}]
The result follows from Lemmas~\ref{lemma:prefixBS} and~\ref{lemma:nonprefixBS}.
\end{proof}

\subsection{Complexity of $\mathbf u_p$ is significantly larger than $4n$}
Let us prove that the infinite word $\mathbf u_p$ from the counterexample to Conjecture $4n$ not only satisfies ${\mathcal C}_{\mathbf u_p}(n)> 4n$ for all $n \geq 10$, but its complexity is significantly larger than $4n$.

We start with a~simple observation concerning the lengths of weak bispecial factors.
\begin{observation}\label{obs:length_weakBS}
For all $i \in \mathbb N, \ i\geq 1$, we have:
$$\begin{array}{rclcl}
|s_{4i+5}|-|w_{4i+3}|&=&2|w_{4i+1}|-2|w_{4i}|&=&2|w_{4i}|+4,\\
|s_{4i+3}|-|w_{4i+1}|&=&2|w_{4i-1}|-2|w_{4i-2}|&=&2|w_{4i-2}|-6.
\end{array}$$
\end{observation}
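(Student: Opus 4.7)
The plan is to read off both equalities directly from the explicit formulas already established, so essentially no new combinatorial work is required; the proof is a bookkeeping exercise on lengths.

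First I would use Lemma~\ref{lemma:BSs_k}, which gives
\[
s_{4k+1}=R(w_{4(k-1)+1})\,w_{4(k-1)}^{-1}\,w_{4(k-1)+3}\,w_{4(k-1)}^{-1}\,w_{4(k-1)+1}
\]
and a symmetric expression for $s_{4k+3}$. Taking lengths (and using that $|R(u)|=|E(u)|=|u|$) yields
\[
|s_{4k+1}|=2|w_{4(k-1)+1}|+|w_{4(k-1)+3}|-2|w_{4(k-1)}|,
\]
\[
|s_{4k+3}|=2|w_{4(k-1)+3}|+|w_{4k+1}|-2|w_{4k-2}|.
\]
Setting $k=i+1$ in the first identity and subtracting $|w_{4i+3}|$ gives immediately $|s_{4i+5}|-|w_{4i+3}|=2|w_{4i+1}|-2|w_{4i}|$, and setting $k=i$ in the second and subtracting $|w_{4i+1}|$ gives $|s_{4i+3}|-|w_{4i+1}|=2|w_{4i-1}|-2|w_{4i-2}|$. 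That settles the left-hand equalities in both lines.

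For the right-hand equalities I would invoke Lemma~\ref{lemma:consecutive_members}. The relation $w_{4k+1}=w_{4k}\,10\,E(w_{4k})$ implies $|w_{4k+1}|=2|w_{4k}|+2$, so
\[
2|w_{4i+1}|-2|w_{4i}|=2\bigl(2|w_{4i}|+2\bigr)-2|w_{4i}|=2|w_{4i}|+4,
\]
which is the first line. Analogously, $w_{4k+3}=w_{4k+2}(010)^{-1}R(w_{4k+2})$ gives $|w_{4k+3}|=2|w_{4k+2}|-3$, whence taking $k=i-1$ we get $|w_{4i-1}|=2|w_{4i-2}|-3$ and therefore
\[
2|w_{4i-1}|-2|w_{4i-2}|=2\bigl(2|w_{4i-2}|-3\bigr)-2|w_{4i-2}|=2|w_{4i-2}|-6,
\]
which is the second line.

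There is no real obstacle here: the only thing to be careful about is that all referenced prefixes $w_{4(i-1)}$, $w_{4i-2}$ and $w_{4i-1}$ exist (i.e.\ have nonnegative index), so the observation is stated for $i\geq 1$. Otherwise everything is immediate from the recurrences in Lemmas~\ref{lemma:BSs_k} and~\ref{lemma:consecutive_members}.
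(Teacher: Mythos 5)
Your proposal is correct and follows exactly the paper's own route: take lengths in the explicit expressions for $s_{4k+1}$ and $s_{4k+3}$ from Lemma~\ref{lemma:BSs_k} to get $|s_{4i+5}|=2|w_{4i+1}|+|w_{4i+3}|-2|w_{4i}|$ and $|s_{4i+3}|=2|w_{4i-1}|+|w_{4i+1}|-2|w_{4i-2}|$, then substitute the length recurrences from Lemma~\ref{lemma:consecutive_members}. You merely write out the arithmetic that the paper leaves implicit.
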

\begin{proof}
It follows from the form of weak bispecials described in Lemma~\ref{lemma:BSs_k} that
\begin{equation}\label{eq:weakBS1}
\begin{array}{rcl}
|s_{4i+5}|&=&2|w_{4i+1}|+|w_{4i+3}|-2|w_{4i}|,\\
|s_{4i+3}|&=&2|w_{4i-1}|+|w_{4i+1}|-2|w_{4i-2}|.
\end{array}
\end{equation}
Applying then Lemma~\ref{lemma:consecutive_members}, we obtain the statement.
\end{proof}
\begin{theorem}\label{limsup}
Let ${\mathbf u_p} = {\mathbf u}(1^{\omega}, (EERR)^{\omega})$. Then its complexity satisfies:
$$\limsup\frac{{\mathcal C}_{\mathbf u_p}(n)}{n} \geq 4.57735.$$
\end{theorem}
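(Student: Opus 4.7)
\emph{Strategy and reduction.} The plan is to evaluate $\mathcal{C}_{\mathbf u_p}$ along the subsequence $n_i := |s_{4i+5}|$, which by Corollary~\ref{coro:comp1diff} is the right endpoint of the longer family of ``$\Delta\mathcal{C}_{\mathbf u_p}\geq 6$'' intervals, i.e.\ it is the argument at which the accumulated first difference above the baseline $4$ has just reached a new local maximum. Applied to these points, Corollary~\ref{coro:comp1diff} gives
$$\mathcal{C}_{\mathbf u_p}(n_i) \;\geq\; \mathcal{C}_{\mathbf u_p}(10) + 4(n_i-10) + 2L_i,$$
where $L_i$ is the total length of the ``$6$-ranges'' $\bigcup_{j=1}^{i}\bigl((|w_{4j+1}|,|s_{4j+3}|]\cup(|w_{4j+3}|,|s_{4j+5}|]\bigr)$. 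Observation~\ref{obs:length_weakBS} rewrites this as
$$L_i \;=\; 2\sum_{j=1}^{i}\bigl(|w_{4j}|+|w_{4j-2}|\bigr) - 2i,$$
and Lemma~\ref{lemma:BSs_k} combined with Lemma~\ref{lemma:consecutive_members} yields the clean identity $n_i = 2|w_{4i+2}|+2|w_{4i}|+1$. Thus the problem reduces to the asymptotics of $p_k := |w_{4k}|$ and $q_k := |w_{4k+2}|$.

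\emph{Recurrence and eigenvalues.} The four-step recurrence in Lemma~\ref{lemma:consecutive_members} collapses, after eliminating the odd indices, into the linear system
$$q_k = 4p_k - q_{k-1} + 4, \qquad p_{k+1} = 15 p_k - 4 q_{k-1} + 10,$$
whose transition matrix $\bigl(\begin{smallmatrix} 15 & -4 \\ 4 & -1 \end{smallmatrix}\bigr)$ has characteristic polynomial $\lambda^2-14\lambda+1$, with dominant root $\lambda = 7+4\sqrt{3}$ (the subdominant root $7-4\sqrt 3$ has absolute value less than $1$ and dies off). Reading off the corresponding eigenvector yields the asymptotic ratios $q_{k-1}/p_k \to 2-\sqrt{3}$ and, via $q_k = 4p_k - q_{k-1} + 4$, also $q_k/p_k \to 2+\sqrt{3}$. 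A standard geometric-sum estimate then gives $\sum_{j=1}^{i}p_j \sim \lambda\,p_i/(\lambda-1)$.

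\emph{Final computation.} Combining these ingredients, $n_i/p_i \to 6+2\sqrt{3}$ and $L_i/p_i \to 2(3-\sqrt{3})\lambda/(\lambda-1)$, so
$$\lim_{i\to\infty}\frac{2L_i}{n_i} \;=\; \frac{2(2-\sqrt{3})\,\lambda}{\lambda-1} \;=\; \frac{\sqrt{3}}{3},$$
using $\lambda/(\lambda-1)=(3+2\sqrt{3})/6$ and the identity $(2-\sqrt{3})(3+2\sqrt{3})=\sqrt{3}$. Hence $\mathcal{C}_{\mathbf u_p}(n_i)/n_i \to 4+\sqrt{3}/3$, and since the limit along a subsequence lower-bounds the $\limsup$, we conclude $\limsup_n \mathcal{C}_{\mathbf u_p}(n)/n \geq 4+\sqrt{3}/3 > 4.57735$, as claimed. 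The main technical obstacle is purely algebraic bookkeeping: deriving the reduced $2{\times}2$ system from the four-step recurrence of Lemma~\ref{lemma:consecutive_members}, correctly identifying the dominant eigenvector to obtain the two limiting ratios $2\pm\sqrt{3}$, and simplifying $2(2-\sqrt{3})\lambda/(\lambda-1)$ to the clean value $\sqrt{3}/3$; the combinatorial substance has already been extracted into Corollary~\ref{coro:comp1diff} and Observation~\ref{obs:length_weakBS}.
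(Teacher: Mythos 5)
Your proof is correct and follows essentially the same route as the paper: evaluate $\mathcal{C}_{\mathbf u_p}$ at the right endpoints of the intervals where $\Delta\mathcal{C}_{\mathbf u_p}\geq 6$ using Corollary~\ref{coro:comp1diff} and Observation~\ref{obs:length_weakBS}, and reduce everything to the growth of $|w_{4k}|$ and $|w_{4k+2}|$, governed by the characteristic polynomial $x^2-14x+1$ with dominant root $\tau=7+4\sqrt{3}$. The only (cosmetic) differences are that you extract the limiting ratios $2\pm\sqrt{3}$ from the eigenvector of the $2\times 2$ transition matrix rather than solving the recurrence in closed form as in~\eqref{eq:solution}, and that you simplify the resulting constant to the exact value $4+\sqrt{3}/3$, which indeed equals the paper's $4+\frac{\tau'+5}{\tau+5}\cdot\frac{2\tau}{\tau-1}$.
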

\begin{proof}
For $n\geq 10$ we have ${\mathcal C}(n)={\mathcal C}(10)+\sum_{\ell=10}^{n-1}\Delta {\mathcal C}(\ell)$,
where by Corollary~\ref{coro:comp1diff} we have for $\ell \geq 10$:
$$\begin{array}{rl}
\Delta{\mathcal C}(\ell) \geq 6 & \text{if $|w_{4i+1}|<\ell \leq |s_{4i+3}|$ or $|w_{4i+3}|<\ell \leq |s_{4i+5}|$ for some $i \geq 1$};\\
\Delta{\mathcal C}(\ell) \geq 4 & \text{otherwise}.
\end{array}$$

Let us set $n=|s_{4k+5}|+1$. Since ${\mathcal C}(10)=42$, we get the following expression:
$${\mathcal C}(n)\geq 42+4(n-10)+2\sum_{i=1}^k\left(|s_{4i+3}|-|w_{4i+1}|\right)+2\sum_{i=1}^k\left(|s_{4i+5}|-|w_{4i+3}|\right).$$
Inserting formulas from Observation~\ref{obs:length_weakBS}, we obtain:
\begin{equation}\label{eq:estimateC1}
{\mathcal C}(n)\geq 4n+2-4k+4\sum_{i=1}^k\left(|w_{4i-2}|+|w_{4i}|\right).
\end{equation}
Using~\eqref{eq:weakBS1} we have $|s_{4k+5}|=2|w_{4k+1}|+|w_{4k+3}|-2|w_{4k}|$ and applying Lemma~\ref{lemma:consecutive_members} we obtain
\begin{equation}\label{s4k+5}
|s_{4k+5}|=2|w_{4k}|+2|w_{4k+2}|+1.
\end{equation}
Therefore it suffices to deal with $w_{n}$ for even $n$.

By Lemma~\ref{lemma:consecutive_members} we easily deduce the following set of equations:
\begin{equation}\label{eq:set_for_w_2n}
\begin{array}{rcl}
|w_{4k+2}|&=&4|w_{4k}|-|w_{4k-2}|+4,\\
|w_{4k+4}|&=&4|w_{4k+2}|-|w_{4k}|-6,\\
|w_{4k}|&=&4|w_{4k-2}|-|w_{4k-4}|-6.
\end{array}
\end{equation}
We multiply the first equation by four and add the remaining two equations so that we get the following recurrence equation for $|w_{4k}|$:
$$|w_{4k+4}|=14|w_{4k}|-|w_{4k-4}|+4.$$
The initial values are $|w_0|=0$ and $|w_4|=10$.
The solution of the above recurrence equation reads
\begin{equation}\label{eq:solution}
|w_{4k}|=\frac{1+2\sqrt{3}}{6}\tau^k+\frac{1-2\sqrt{3}}{6}{(\tau')}^k-\frac{1}{3},
\end{equation}
where $\tau=7+4\sqrt{3}>1$ and $\tau'=7-4\sqrt{3} \in (0,1)$ are roots of the equation $x^2-14x+1=0$.
It follows using the last equation from~\eqref{eq:set_for_w_2n} that
\begin{equation}\label{eq:w_{4k-2}}
4|w_{4k-2}|=|w_{4k-4}|+|w_{4k}|+6=\frac{16}{3}+\frac{1+2\sqrt{3}}{6}
\tau^k(\tau'+1)+\frac{1-2\sqrt{3}}{6}{(\tau')}^k(\tau+1).\end{equation}
Consequently,
$$4\sum_{i=1}^k\left(|w_{4i-2}|+|w_{4i}|\right)=
4k+\frac{1+2\sqrt{3}}{6}(\tau'+5)\frac{\tau}{\tau-1}(\tau^k-1)+c_1(k),$$
where $c_1(k)$ is a~bounded sequence.

Inserting the previous formula into~\eqref{eq:estimateC1}, we have:
\begin{equation}\label{eq:complexity2}
{\mathcal C}(n)\geq 4n+2+\frac{1+2\sqrt{3}}{6}(\tau'+5)\frac{\tau}{\tau-1}(\tau^k-1)+c_1(k).
\end{equation}

In order to continue with the estimate on complexity, we have to express the relation between $k$ and $n$. We know that $n=|s_{4k+5}|+1$. Combining Equations~\eqref{s4k+5}, \eqref{eq:solution} and~\eqref{eq:w_{4k-2}}, we obtain:
$$|s_{4k+5}|=3+\frac{1+2\sqrt{3}}{6}\frac{5+\tau}{2}\tau^k+c_2(k),$$
where $c_2(k)$ is again a~bounded sequence.
Consequently,
$$n =|s_{4k+5}|+1=\frac{1+2\sqrt{3}}{6}\frac{5+\tau}{2}\tau^k+c_3(k),$$
where constants have been included in the bounded sequence $c_3(k)$.
Hence, we have:
$$\tau^k=\frac{6}{1+2\sqrt{3}}\frac{2}{5+\tau}n+c_4(k),$$
where $c_4(k)$ is a~bounded sequence.
Inserting the previous formula into~\eqref{eq:complexity2}, we obtain the following lower bound on complexity:
$${\mathcal C}(n)\geq 4n+n\frac{\tau'+5}{\tau+5}\frac{2\tau}{\tau-1}+c_5(k),$$
where $c_5(k)$ is a~bounded sequence, where all constants have been included.
Finally, we obtain:
$$\limsup\frac{{\mathcal C}(n)}{n}\geq 4+\frac{\tau'+5}{\tau+5}\frac{2\tau}{\tau-1}\doteq 4,57735.$$

\end{proof}

\section{Open problems}\label{sec:open_problems}
It remains as an open problem to determine a~new upper bound on the complexity of binary generalized pseudostandard words.

\begin{itemize}
\item Let us start here with a~simple observation: If both $E$ and $R$ occur in the sequence $\Theta$ an infinite number of times, then since the language of $\mathbf u(\Delta, \Theta)$ is closed under the antimorphisms $E$ and $R$, the first difference of complexity has even values, i.e., $\Delta {\mathcal C}(n) \in \{2,4,6,\ldots \}$.
\item It might be helpful to illustrate what is the situation for sufficiently long left special factors of the Thue--Morse word and what seems to be the situation for the word $\mathbf u_p$. See Figure~\ref{fig:left_specials}.
In both cases, there are two infinite left special branches -- the infinite word itself and its $ER$-image, i.e., the word that arises when exchanging ones with zeroes. The weak bispecial factors form finite branches and the common prefixes of the weak bispecials and the infinite left special branch correspond to strong bispecials. The first difference of complexity $\Delta {\mathcal C}(n)$ equals to the number of left special factors of length $n$. In the case of $\mathbf u_p$ in contrast to $\mathbf u_{TM}$, the detached parts of finite branches may overlap, thus the first difference of complexity is larger. The question is whether it is possible to construct an example where there are even more overlapping detached parts of finite branches.

\begin{figure}[!h]
\centering
\def\svgwidth{\columnwidth}
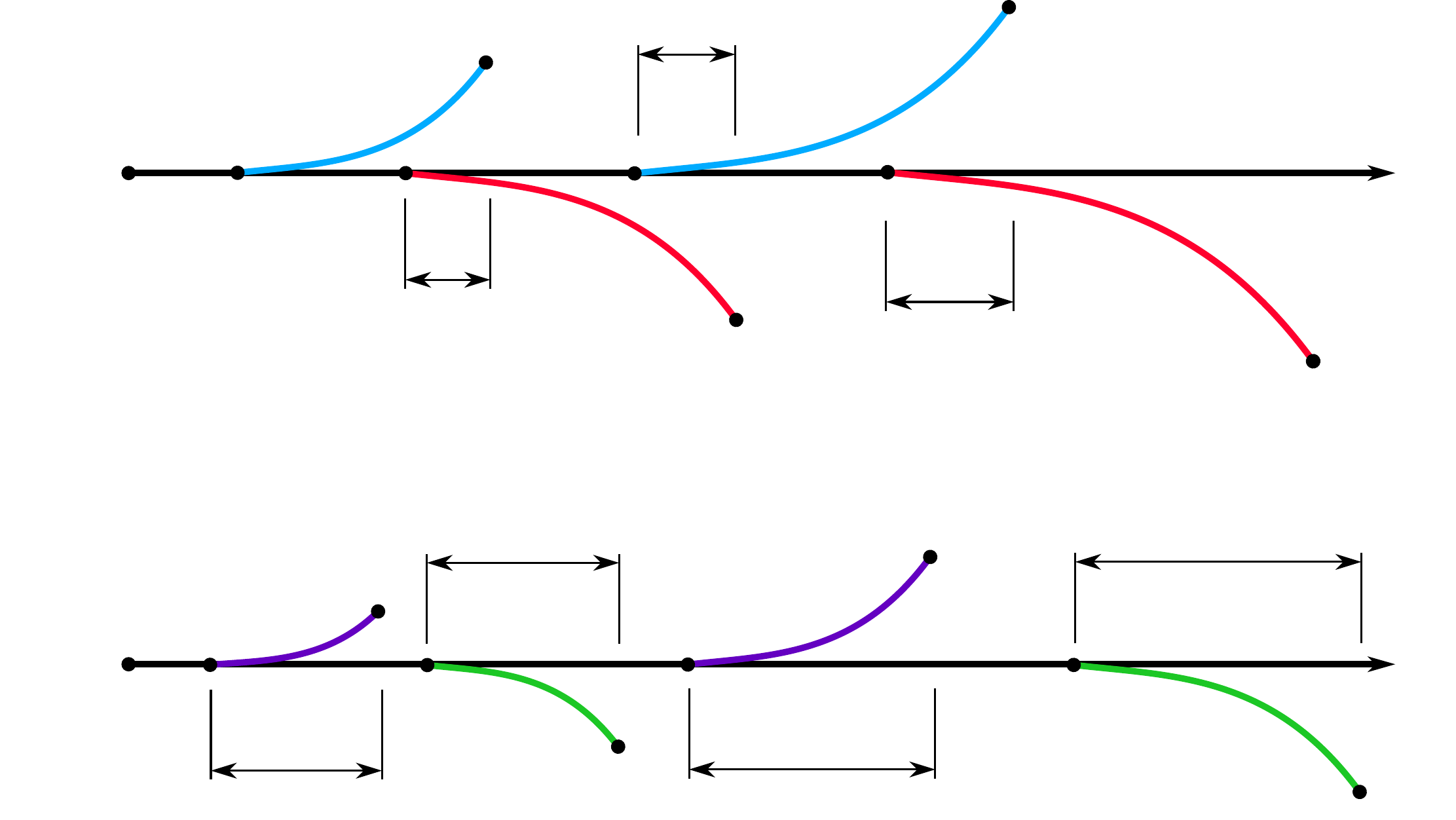
\caption{Infinite left special factors of $\mathbf u_p$ and $\mathbf u_{TM}.$ We illustrate in each case only one of two infinite left special branches. Hence, the total first difference of complexity has to be doubled.}\label{fig:left_specials}
\end{figure}

\item Let us state a~new conjecture based on our computer experiments:
\begin{conjecture}[Conjecture $6n$]
Let $\mathbf u$ be a~binary generalized pseudostandard word, then its complexity satisfies $${\mathcal C}_{\mathbf u}(n)<6n \quad \text{for all $n \in \mathbb N$}.$$
\end{conjecture}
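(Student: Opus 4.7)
The plan is to bound the first difference of complexity $\Delta\mathcal{C}_{\mathbf u}(n)$ uniformly by $6$ for all $n\geq 1$. Since $\mathcal{C}_{\mathbf u}(1)\leq 2$, a telescoping argument would then give
$$\mathcal{C}_{\mathbf u}(n)\;\leq\;\mathcal{C}_{\mathbf u}(1)+\sum_{k=1}^{n-1}\Delta\mathcal{C}_{\mathbf u}(k)\;\leq\;2+6(n-1)\;=\;6n-4\;<\;6n.$$
As a preliminary reduction I would first apply Theorem~\ref{thm:norm} to replace $\Lambda$ by its normalized form, and then separate the degenerate cases: if $\Delta$ ends in $0^{\omega}$ or $1^{\omega}$, $\mathbf u$ is eventually periodic and the bound is trivial for large $n$ (and finitely many small $n$ can be checked directly); if $\Theta$ contains only finitely many $E$'s (resp.\ $R$'s), then $\mathbf u$ reduces from some prefix onwards to an $R$-standard (resp.\ $E$-standard) word, where $\Delta\mathcal{C}\le 2$ eventually. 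The remaining generic case is where both letters occur in $\Delta$ and both antimorphisms occur in $\Theta$ infinitely often, so the language is closed under both $E$ and $R$ and $\Delta\mathcal{C}(n)$ is always even.

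The structural core is a classification of bispecial factors generalizing Lemmas~\ref{lemma:strongBS}, \ref{lemma:BSs_k} and~\ref{lemma:nonprefixBS}. I would prove, by an inductive case analysis on the local pattern of $\Lambda$ (recurrences between $w_{n}$ and $w_{n+1}$ depending on the four relevant symbols in $\Delta$ and the four antimorphisms in $\Theta$ around position $n$), that every bispecial factor of $\mathbf u$ is either (a) an image under $\{\mathrm{id},E,R,ER\}$ of some $w_k$, which accounts for the strong bispecials, or (b) a weak bispecial lying on one of finitely many families parameterized by the position in $\Lambda$ and constructed from consecutive $w_k$'s by a formula analogous to $s_{4k+1}=R(w_{4(k-1)+1})w_{4(k-1)}^{-1}w_{4(k-1)+3}w_{4(k-1)}^{-1}w_{4(k-1)+1}$. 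Each such weak bispecial is the base of a finite ``detached branch'' in the tree of left special factors.

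Using this classification, the key combinatorial claim to establish is that at every length $n$ the tree of left special factors of $\mathbf u$ has at most $6$ nodes: two come from the pair of infinite left special branches (the word $\mathbf u$ and its $ER$-image, cf.\ Figure~\ref{fig:left_specials}), and the remaining at most $4$ come from detached weak-bispecial branches. Concretely, one would show that in each reversal class at most two detached branches can be simultaneously active, i.e.\ if $s$ and $s'$ are distinct weak bispecials in the same class with $|s|<|s'|$, then the detached branch rooted at $s$ terminates before that of the next-but-one weak bispecial begins. The plausibility of this rests on a sharp length comparison between nested weak-bispecial families and the strong-bispecial $w_k$'s lying between them — a refinement of Observations~\ref{obs:nerovn} and~\ref{obs:length_weakBS} — together with the doubling-type growth $|w_{k+1}|\geq 2|w_k|$ that follows from the normalized recurrences. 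The main obstacle is precisely this overlap estimate in its full generality: the counterexample $\mathbf u_p$ already exhibits two simultaneously active detached branches per side, and one has to rule out a directive bi-sequence engineered so that three weak bispecials of carefully chosen lengths are all active at once. If that can be excluded, the uniform bound $\Delta\mathcal{C}_{\mathbf u}(n)\le 6$ follows and the conjecture is settled.
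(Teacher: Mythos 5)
You have not given a proof, and there is none to compare it with: the statement is Conjecture $6n$, which the paper itself states as an \emph{open problem} in Section~\ref{sec:open_problems}, supported only by computer experiments and by the example $\mathbf u(1^{\omega}, (RRRRREEEEE)^{\omega})$ with ${\mathcal C}(n)>5n$. Your proposal is a research programme rather than an argument, and you concede as much in the final sentence. The two pivotal steps are both unestablished. First, the classification of all bispecial factors of an arbitrary binary generalized pseudostandard word, which you invoke ``by an inductive case analysis on the local pattern of $\Lambda$,'' is exactly the heavy technical content: in the paper the analogous classification (Lemmas~\ref{lemma:1w41}, \ref{lemma:vyskyt_w_k} and~\ref{lemma:nonprefixBS}, via the regular/irregular occurrence machinery) is carried out only for the single word $\mathbf u_p=\mathbf u(1^{\omega},(EERR)^{\omega})$, and its proofs lean throughout on the explicit recurrences of Lemma~\ref{lemma:consecutive_members} and on specific central factors such as $10$, $10101$, $1w_{4k}1$, $0w_{4k+2}1$. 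No version for general normalized $\Lambda$ exists, and producing one is a substantial project, not a routine induction.

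Second, your ``key combinatorial claim'' --- that at any length at most four detached weak-bispecial branches are active, hence at most six left special factors --- is precisely the question the paper leaves open (``whether it is possible to construct an example where there are even more overlapping detached parts of finite branches''). You offer no mechanism to exclude a directive bi-sequence engineered so that three or more detached branches per side overlap; since arbitrarily long blocks of $R$'s and $E$'s in $\Theta$ are allowed, the lengths $|w_k|$ and $|s_k|$ can be tuned quite freely, and nothing in Observations~\ref{obs:nerovn} or~\ref{obs:length_weakBS} (proved only for $\mathbf u_p$) controls this. Moreover, the growth estimate you rely on is false as stated: the pseudopalindromic closure can add as little as one letter, e.g.\ for $\mathbf u_p$ one has $w_2=1010$ and $w_3=10101$, so $|w_{k+1}|\geq 2|w_k|$ fails, and in general $|w_{k+1}|$ only satisfies $|w_k|+1\leq|w_{k+1}|\leq 2|w_k|+2$. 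Your telescoping reduction (granting $\Delta{\mathcal C}\leq 6$ one gets ${\mathcal C}(n)\leq 6n-4<6n$) and the degenerate-case reductions are fine as far as they go, but the conjecture remains open: the gap is not a missing detail but the two core lemmas themselves.
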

The arguments supporting our conjecture are as follows:
     \begin{enumerate}
     \item On one hand, in all our examples the first difference of complexity satisfies $\Delta {\mathcal C}(n) \leq 6$.
     \item On the other hand, we have checked for $\mathbf u=\mathbf u(1^{\omega}, (RRRRREEEEE)^{\omega})$ that ${\mathcal C}_{\mathbf u}(n)>5n$ for some $n \in \mathbb N$.
         \end{enumerate}
\end{itemize}

\section*{Acknowledgements}
We acknowledge the financial support of Czech Science
Foundation GA\v{C}R 13-03538S and the grant SGS14/205/OHK4/3T/14.
We would like to thank Edita Pelantov\'a for her useful comments and advice concerning in particular complexity of infinite words.

%
%
%

\section{Appendix}\label{sec:Appendix}
In this appendix, we list the members of the sequence $(w_k)_{k=1}^{9}$ for the infinite word $\mathbf{u_p} = {\mathbf u}(1^{\omega}, (EERR)^{\omega})$. For their generation the program Sage~\cite{St} was used.
We have moreover highlighted the first occurrences of weak bispecials $s_{2k+1}, \ k\geq 2$.
\begin{align}
    w_1 =& \;10 \nonumber \\[1mm]
    w_2 =& \;1010 \nonumber \\[1mm]
    w_3 =& \;10101 \nonumber \\[1mm]
    w_4 =& \;1010110101 \nonumber \\[1mm]
    w_5 =& \;101\underbrace{011010110}_{s_5}0101001010 \nonumber \\[1mm]
    w_6 =& \;1010110101100101001010110101100101001010 \nonumber \\[1mm]
    w_7 =& \;10101101011001010\underbrace{010101101011001010010101}_{s_7}001010011010 \nonumber \\
        &  \;110101001010011010110101 \nonumber \\[1mm]
    w_8 =& \;10101101011001010010101101011001010010101001010011010 \nonumber \\
        &  \;11010100101001101011010110010100101011010110010100101 \nonumber \\
        &  \;01001010011010110101001010011010110101 \nonumber \\[1mm]
         w_9 =& \;10101101011001010010101101011001010010101001010011010 \nonumber \\
    		&  \;11\underbrace{010100101001101011010110010100101011010110010100101} \nonumber \\
    		&  \;\underbrace{01001010011010110101001010011010110101100101001010}_{s_9}011 \nonumber \\
		&  \;01011010100101001101011010101101011001010010101101011 \nonumber \\
		&  \;00101001010011010110101001010011010110101011010110010 \nonumber \\
    		&  \;1001010110101100101001010 \nonumber
        \end{align}

\end{document}